\newtheorem{theorem}{Theorem}[section]
\newtheorem{lemma}[theorem]{Lemma}
\newtheorem{corollary}[theorem]{Corollary}
\newtheorem{conjecture}{Conjecture}
\newtheorem{question}{Question}
\theoremstyle{remark}
\newtheorem{remark}[theorem]{Remark}
\numberwithin{equation}{section}
\newcommand{\ii}{\ensuremath{\sqrt{-1}}}
\newcommand{\pp}{\bar\partial}
\newcommand{\vol}{\mathrm{Vol}}
\begin{document}

\title[On the local topology of Ricci limit spaces]{On the local topology of non-collapsed Ricci bounded limit spaces}
\author{Song Sun}
\address{Institute for Advanced Study in Mathematics, Zhejiang University, Hangzhou 310058, China}
\email{songsun@zju.edu.cn}
\author{Jikang Wang}
\address{Department of Mathematics, University of California, Berkeley, CA 94720, USA}
\email{jikangwang1117@gmail.com}
\author{Junsheng Zhang}
\address{Courant Institute of Mathematical Sciences\\
  New York University, 251 Mercer St\\
  New York, NY 10012\\}
\curraddr{}
\email{jz7561@nyu.edu}
\thanks{}


\begin{abstract}We show that for a pointed Gromov-Hausdorff limit of non-collapsed Riemannian manifolds with bounded Ricci curvature, the local $b_1$
  of the regular loci vanishes. We also discuss applications and some open questions.

\end{abstract}

\maketitle
\section{Introduction}
A \emph{Ricci limit space} is, by definition, a pointed Gromov--Hausdorff limit of complete Riemannian manifolds whose Ricci curvature is uniformly bounded from below.  It is known that a Ricci limit space is always semi-locally simply connected (see \cite{Pan-Wei,Wang2022,Wang2021}). 
When an additional upper bound on the Ricci curvature and the volume non-collapsing condition are imposed, the regular set is an open subset with full top-dimensional Hausdorff measure and admits a smooth manifold structure \cite{CC97}.

In this paper, we are interested in studying the local topology of the regular loci around a singular point.   Before stating the main result, we introduce some notations.
For \( v \in (0,1] \) and \( n \in \mathbb{Z}_+ \), let \( \mathcal{M}(n, v) \) denote the set of pointed Gromov--Hausdorff limits of pointed complete Riemannian manifolds \( (M, g, p) \) of dimension \( n \) satisfying 
\begin{equation}
    |\mathrm{Ric}(g)| \leq  n-1
\end{equation} and 
    \begin{equation}
         \begin{aligned}
         \underline{\vol}(B_r(q))&:=\frac{\vol(B_r(q))}{\omega_nr^n} \geq v, \text{ for any $r\in (0,1]$ and $q\in B_1(p)$.}
    \end{aligned}
    \end{equation}
    Notice that if $(X, d, p)\in \mathcal M(n, v)$, then $(X, \lambda d, p)\in \mathcal M(n, v)$ for all $\lambda\geq 1$. 

    Denote by $B_r^n(0)$ the standard ball of radius $r$ in $\mathbb R^n$. Following \cite{CC97}, given \( (X, d, p) \in \mathcal{M}(n, v) \), for  \( \epsilon, \delta > 0 \), we denote the ($\epsilon, \delta$)-\emph{effective regular set} by 
$$\mathcal{R}_{\epsilon, \delta}  = \left\{ x \in X \mid d_{GH}\left( B_r(x), B_r^n(0) \right) \leq \epsilon r, \text{ for all } 0 < r \leq \delta \right\}.$$
We define the $\epsilon$-\emph{regular set} and the \emph{regular set} by
$$\mathcal{R}_\epsilon  = \bigcup_{\delta > 0} \mathcal{R}_{\epsilon, \delta},\quad  \mathcal{R}  =\bigcap_{\epsilon > 0} \mathcal R_\epsilon= \bigcap_{\epsilon > 0} \bigcup_{\delta > 0} \mathcal{R}_{\epsilon, \delta}. $$
Denote by $\mathcal S=X\setminus \mathcal R$ the \emph{singular set}. Sometimes to emphasize the space $X$ itself, we also write $X^{reg}=\mathcal{R}$ and $X^{sing}=\mathcal{S}$. 

The main result of this paper is the following:

\begin{theorem}[Exponent bound of local homology group]\label{main thm-unrescaled version}
    Given \( v \in (0,1] \) and \( n \in \mathbb{Z}_+ \), there exist constants \( t= t(n, v) > 0 \) and \( C = C(n, v) > 0 \) such that for any \( (X, d, p) \in \mathcal{M}(n, v) \), the local homology group
    \[
    \mathrm{Im}\left( H_1(B_t(p) \cap \mathcal{R}, \mathbb{Z}) \rightarrow H_1(B_1(p) \cap \mathcal{R}, \mathbb{Z}) \right)
    \]
    has exponent bounded by \( C \).
\end{theorem}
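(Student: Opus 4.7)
My approach is a contradiction combined with a blow-up argument, reducing the theorem to a topological statement about non-collapsed Ricci-flat blow-up limits.

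\textbf{Setup and blow-up.} Suppose the conclusion fails. Then for each $i$ there exist $(X_i,d_i,p_i)\in\mathcal{M}(n,v)$ and a loop $\gamma_i\subset B_{1/i}(p_i)\cap X_i^{reg}$ whose image in $H_1(B_1(p_i)\cap X_i^{reg};\mathbb{Z})$ has order exceeding $i$. Choose a rescaling factor $r_i\in[1,i]$ so that $\gamma_i$ has diameter of order $1$ in the metric $r_id_i$. Rescaling by $\lambda=r_i\geq 1$ preserves $\mathcal{M}(n,v)$, and the Ricci bound becomes $(n-1)/r_i^2\to 0$; by Gromov compactness a subsequence of $(X_i,r_id_i,p_i)$ converges in the pointed GH sense to $(Y,d_Y,q_\infty)$, where $Y$ is a non-collapsed Ricci-flat limit with $\underline{\vol}\geq v$ on balls of every radius, and $Y^{reg}$ is a smooth Ricci-flat open manifold.

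\textbf{Passing loops to the limit.} Using quantitative Cheeger--Colding regularity on the regular set (harmonic splitting maps, bi-H\"older Reifenberg charts) one homotopes $\gamma_i$ inside $X_i^{reg}$ to a loop $\tilde\gamma_i$ lying in a uniformly almost-Euclidean region at a definite scale, preserving its class in $H_1$. Subsequentially $\tilde\gamma_i$ converges to a smooth loop $\gamma_\infty\subset B_R(q_\infty)\cap Y^{reg}$ for some $R=R(n,v)$. Since $B_1(p_i)$ in the original metric corresponds to $B_{r_i}(p_i)$ in the rescaled metric, and these exhaust $Y$ in the GH sense, a diagonal argument together with the failure assumption forces $\gamma_\infty$ to have infinite order in $H_1(Y^{reg};\mathbb{Z})$.

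\textbf{Topology of the blow-up limit.} The theorem reduces to showing that $H_1(Y^{reg};\mathbb{Z})$ is torsion (of uniformly bounded exponent) for every such $Y$. Since $Y$ has uniform Euclidean volume growth, it admits tangent cones at infinity $C(Z)$ with $Z$ a non-collapsed Ricci limit of dimension $n-1$. The cone structure plus a volume-comparison bound on the orbifold fundamental-group-type invariant of $Z^{reg}$ controls loops near infinity, and a Mayer--Vietoris / neck-decomposition argument in the spirit of Jiang--Naber and Cheeger--Jiang--Naber should propagate this bound back to unit scale.

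\textbf{Main obstacle.} The principal technical hurdle is this last propagation: torsion in $H_1$ must be shown to behave well under the gluing of neck regions across a non-manifold singular set that may disconnect $Y^{reg}$ in subtle ways. Here the Cheeger--Naber codimension $4$ regularity for non-collapsed two-sided Ricci limits is essential --- codimension $2$ alone (available under only a one-sided Ricci bound) would allow $H_1$ torsion of unbounded exponent to appear across the singular set, whereas codimension $4$ makes a $2$-chain witnessing the relation $C\cdot[\gamma]=0$ transversally avoid the singular set. Producing a \emph{uniform} exponent bound $C(n,v)$, rather than merely finiteness per limit, is expected to be the technical heart of the proof.
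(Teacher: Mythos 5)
There is a genuine gap, and it sits exactly where you place your ``main obstacle.'' Your blow-up reduction does not actually reduce the problem: after rescaling, the limit $(Y,q_\infty)$ is again just a non-collapsed Ricci limit space (Ricci-flatness buys nothing here), so the statement you reduce to --- that $H_1(Y^{\mathrm{reg}};\mathbb{Z})$ is torsion of \emph{uniformly} bounded exponent --- is essentially the theorem itself, with no induction parameter gained. Your sketch of how to close this (tangent cones at infinity, a Mayer--Vietoris/neck-decomposition propagation ``in the spirit of'' Jiang--Naber) is deferred rather than proved, and the transversality claim that codimension $4$ lets a $2$-chain witnessing $C\cdot[\gamma]=0$ avoid the singular set is not a valid argument: the singular set is merely a closed set of Hausdorff codimension $\geq 4$ in a non-smooth space, not a submanifold, so there is no transversality to invoke. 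The paper gets traction in a different way: it inducts on the volume ratio lower bound $v$. By Cheeger--Colding's ``almost volume cone implies almost metric cone,'' in the contradiction blow-up either the volume ratio improves by a definite amount at the relevant scale (so the inductive hypothesis for $v_0+\sigma$ applies on unit balls), or the limit is a metric cone $\mathbb{R}^{n-k}\times C(Y)$, where Pan--Wei's bound $|\pi_1(Y)|\leq 1/\mathrm{Vol}(C(Y))$ on the link, an effective version of Anderson's finiteness (loops in $B_{t_0}$ die, up to index $L_0(n,v)$, in $B_{0.9}$), and a projection to the link control the homology. This dichotomy, not codimension-$4$ regularity by itself, is what produces the uniform constant $C(n,v)$.

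A second, related gap is your ``passing loops to the limit'' step. Homology classes and, more importantly, the relations certifying their order do not transfer between a GH-convergent sequence and its limit just by Cheeger--Colding regularity, because the regular set $\mathcal{R}$ is not stable under GH approximation (a regular point can be a limit of singular ones and vice versa). This is precisely why the paper works throughout with the effective regular sets $\mathcal{R}_{\epsilon,\delta}$: it proves their stability under GH perturbation, effective density and almost geodesic convexity, a loop-decomposition lemma splitting a null-homologous loop into loops at the effective regular scale, and a transfer homomorphism between the $H_1$-images of the limit and of the approximating spaces; only at the very end is Theorem \ref{main thm-unrescaled version} deduced from the effective statement, using that any compact subset of $\mathcal{R}$ lies in some $\mathcal{R}_{\epsilon,\delta}$ and that $t$ and $C$ are independent of $\delta$. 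Without some substitute for this machinery, your claim that the failure assumption ``forces $\gamma_\infty$ to have infinite order in $H_1(Y^{\mathrm{reg}};\mathbb{Z})$'' is unjustified.
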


 
Recall that the \emph{exponent} of a group \( G \) is defined as the smallest positive integer \( N \) such that for every element \( \sigma \in G \), \( \sigma^N \) equals the identity element. In particular, Theorem \ref{main} implies that the local $b_1$ of the regular loci is zero, i.e., $$\mathrm{Im}\left( H_1(B_t(p) \cap \mathcal{R}, \mathbb{R}) \rightarrow H_1(B_1(p) \cap \mathcal{R}, \mathbb{R}) \right)=0.$$

 Theorem \ref{main thm-unrescaled version} is a consequence of the following result concerning the effective regular loci. 
\begin{theorem}\label{main}
    Let  $\epsilon=\epsilon(n)/2$, where $\epsilon(n)$ is the dimensional constant given in Theorem \ref{epsilon regularity}.  Given \( v \in (0, 1] \), \( n \in \mathbb{Z}_+ \), \( \delta > 0 \), there exist positive constants \( t(n, v) \), \( C(n, v) \) and \( \delta'(n, v, \delta)\leq \delta \) such that if \( (X, d, p) \in \mathcal{M}(n, v) \), then the group
\[
\mathrm{Im}\left( H_1\left( B_t(p) \cap \mathcal{R}_{\epsilon, \delta}, \mathbb{Z} \right) \rightarrow H_1\left( B_1(p) \cap \mathcal{R}_{\epsilon, \delta'}, \mathbb{Z} \right) \right)
\]
has exponent bounded by \( C \).
\end{theorem}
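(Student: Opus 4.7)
I would proceed by contradiction and blowup. Suppose the conclusion fails. Then for a fixed $\delta>0$ we may extract sequences $(X_k,d_k,p_k)\in \mathcal M(n,v)$ and $1$-cycles $\gamma_k\subset B_{t_k}(p_k)\cap \mathcal R_{\epsilon,\delta}$ whose images in $H_1(B_1(p_k)\cap \mathcal R_{\epsilon,\delta'_k},\mathbb Z)$ have order greater than $k$, where $t_k,\delta'_k\to 0$ are at our disposal and may be chosen so that $\delta'_k\gg t_k$. Rescale each $X_k$ by $s_k^{-1}$, where $s_k:=\mathrm{diam}(\gamma_k)\le 2t_k$. Since $\mathcal M(n,v)$ is invariant under $\lambda\ge 1$ rescalings, the rescaled spaces $(X_k,s_k^{-1}d_k,p_k)$ stay in $\mathcal M(n,v)$, and by Gromov compactness together with the metric-cone theorem of Cheeger--Colding for non-collapsed bounded-Ricci limits, a subsequence converges in pointed Gromov--Hausdorff topology to a metric cone $(C(Z),o)$ over a compact cross-section $Z$. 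Under the rescaling, the effective regularity parameters $\delta$ and $\delta'_k$ become $\delta/s_k$ and $\delta'_k/s_k$, both tending to $\infty$, so both effective regular sets fill out $C(Z)^{\mathrm{reg}}\cap B_R(o)$ on any fixed ball $B_R(o)$ in the limit. The rescaled $\gamma_k$ have diameter $1$ and converge to a $1$-cycle $\gamma_\infty\subset C(Z)^{\mathrm{reg}}$.

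The next step is a uniform bound on the order of $[\gamma_\infty]$ in $H_1(C(Z)^{\mathrm{reg}},\mathbb Z)$. The cone structure identifies $C(Z)^{\mathrm{reg}}\setminus\{o\}$ with $(0,\infty)\times Z^{\mathrm{reg}}$, and since the vertex has codimension at least $2$, $H_1(C(Z)^{\mathrm{reg}},\mathbb Z)\cong H_1(Z^{\mathrm{reg}},\mathbb Z)$; radial rescaling realizes this isomorphism as an explicit deformation retraction, so the image of $H_1$ of any annulus around $o$ is the same. One then needs a uniform exponent bound $C_0=C_0(n,v)$ on $H_1(Z^{\mathrm{reg}})$, which I would obtain by induction on $n$: the cross-section $Z$ is a non-collapsed $RCD(n-2,n-1)$ space of diameter at most $\pi$ whose tangent cones again lie in a cone class controlled by $n,v$, and the cone-over-cross-section structure can be iterated down to low dimensions, where non-collapsed bounded-Ricci limits are smooth and the conclusion is elementary. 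Hence $[\gamma_\infty]$ has order at most $C_0$.

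Finally one must transfer the relation $C_0[\gamma_\infty]=0$ in the limit back to a relation $C_0[\gamma_k]=0$ in $H_1(B_1(p_k)\cap \mathcal R_{\epsilon,\delta'_k})$. Given a singular $2$-chain $\Sigma_\infty\subset C(Z)^{\mathrm{reg}}$ with $\partial\Sigma_\infty=C_0\gamma_\infty$, I would use a Gromov--Hausdorff approximation map $\phi_k:X_k\to C(Z)$ together with the bi-H\"older chart structure on $\mathcal R_{\epsilon,\delta}$ provided by Theorem \ref{epsilon regularity} to simplicially approximate $\Sigma_\infty$ by a $2$-chain $\Sigma_k\subset \mathcal R_{\epsilon,\delta'_k}$ with $\partial\Sigma_k=C_0\gamma_k$, the construction succeeding once $\delta'_k$ is taken sufficiently small; the required threshold dictates the function $\delta'(n,v,\delta)$ in the statement. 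This contradicts $C_0<k$ for $k$ large. The principal obstacle is precisely this transfer, since the regular set is not $C^0$-stable under Gromov--Hausdorff convergence: to keep the approximating chain inside $\mathcal R_{\epsilon,\delta'_k}$ one must use the uniform bi-H\"older charts essentially and quantify carefully the scale at which the approximation remains effectively regular. A secondary challenge is the inductive exponent bound on $H_1(Z^{\mathrm{reg}})$ for cross-sections, which is of independent interest.
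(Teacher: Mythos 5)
Your contradiction setup does not negate the actual content of the statement, and this is a genuine gap. The whole point of Theorem \ref{main} is that $t$ and $C$ depend only on $(n,v)$ and \emph{not} on $\delta$; only $\delta'$ may depend on $\delta$. If one were allowed to shrink $t$ with $\delta$, the statement would be essentially trivial: a loop in $\mathcal R_{\epsilon,\delta}$ of diameter much smaller than $\delta$ lies in a ball $B_{\delta/2}(x)$ which, by the Reifenberg theorem and Lemma \ref{close regular points}(2), is bi-H\"older to a Euclidean ball and contained in $\mathcal R_{\epsilon,c\delta}\subset\mathcal R_{\epsilon,\delta'}$, so it is already null-homotopic there with exponent $1$. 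Consequently the sequence you posit --- fixed $\delta$, loops $\gamma_k\subset B_{t_k}(p_k)\cap\mathcal R_{\epsilon,\delta}$ with $t_k\to 0$, $\delta'_k\to 0$, and order of $[\gamma_k]$ greater than $k$ --- cannot exist once $t_k\ll\delta$ and $\delta'_k\le c\delta$; and even if it did, contradicting it would only prove the weak, $\delta$-dependent version. A correct contradiction argument must let $\delta$ degenerate against fixed $t,C$, which is exactly why the paper first proves Theorem \ref{weak main} (order bound with $C$ depending on $\delta$ but $t$ independent of $\delta$, by induction on the volume ratio $v$) and then removes the $\delta$-dependence in Theorem \ref{main} by a direct argument: Lemma \ref{contractible in big space} makes $\gamma^{L_0!}$ contractible in $B_{0.9}(p)$, Lemma \ref{small loops} decomposes it into loops at scale comparable to $\delta$, and a rescaling reduces each piece to Theorem \ref{weak main} at the \emph{fixed} parameter $\delta_0=\lambda\mu/16$, giving exponent $L_0!\,\tilde C!$.

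Two further steps of your blow-up would also fail as written. First, rescaling by the diameter of $\gamma_k$ does not produce a metric cone: the Cheeger--Colding ``almost volume cone implies almost metric cone'' theorem yields cones only along tangent-cone scales or when the volume ratio pinches, not for an arbitrary sequence of rescalings of varying spaces; the paper must therefore split into the non-cone case (handled by a volume gap and the induction on $v$) and the cone case (handled by Pan--Wei's bound $|\pi_1(Y)|\le 1/\mathrm{Vol}(C(Y))$, Theorem \ref{pi1 bounded of the link}, together with the radial projection), and this dichotomy is the engine of the proof that your outline has no substitute for. Second, your ``induction on dimension'' for a uniform exponent bound on $H_1(Z^{\mathrm{reg}})$ of cross-sections is unjustified and close to circular: the cross-section is not itself in $\mathcal M(n,v)$ (it is not a non-collapsed limit with two-sided Ricci bounds), so the inductive hypothesis does not apply to it, and the statement you would need there is of the same nature as the theorem being proved. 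Finally, the transfer of the relation from the limit back to $X_k$ inside $\mathcal R_{\epsilon,\delta'_k}$, which you flag as the principal obstacle, is precisely what Lemma \ref{contradiciton sequence} (via Lemma \ref{close regular points} and the Pan--Wei small-loop argument) is designed to do; gesturing at bi-H\"older charts does not by itself keep an approximating $2$-chain inside the effective regular set at a definite scale.
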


Theorem~\ref{main thm-unrescaled version} has several applications. First, we recall that in the proof of the uniform H\"ormander construction for polarized K\"ahler-Einstein metrics in ~\cite{DS12}, a key technical difficulty arises due to a topological obstruction stemming from the holonomy of flat connections on the complement of the singular set in a tangent cone. With Theorem~\ref{main thm-unrescaled version}, the topological obstruction can be ruled out \emph{a priori}. See also the discussion in Section~5.1 of~\cite{DS12}.

Secondly, Theorem \ref{main} yields stronger structural results for singularities of Ricci limit spaces in the K\"ahler setting. Let $\mathcal{C}$ be a metric cone which is a pointed Gromov-Hausdorff limit of non-collapsed \emph{K\"ahler} metrics with uniformly bounded Ricci curvature. It follows from \cite{DS15, LiuSz2} that $\mathcal{C}$ is naturally a normal affine variety. A corollary of Theorem \ref{main} is

\begin{corollary}\label{cor-pi1 of cone}
$\mathcal{C}$ has log terminal singularities.
\end{corollary}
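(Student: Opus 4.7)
The plan is to combine Theorem~\ref{main thm-unrescaled version} with an intrinsic Ricci-flat K\"ahler structure on $\mathcal{C}^{\mathrm{reg}}$ to produce a parallel pluricanonical form of bounded pointwise norm, and then verify the analytic criterion for klt singularities. First, since $\mathcal{C}\in\mathcal M(n,v)$ is a metric cone, smooth convergence on the regular set transfers $|\mathrm{Ric}|_g\le n-1$ to $\mathcal{C}^{\mathrm{reg}}$, while the dilation $\Phi_\lambda:(r,\sigma)\mapsto(\lambda r,\sigma)$ is a homothety identifying $(\mathcal{C},g)$ with $(\mathcal{C},\lambda^{-2}g)$, giving the scaling identity $|\mathrm{Ric}|_g(\lambda x)=\lambda^{-2}|\mathrm{Ric}|_g(x)$. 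Letting $\lambda\to 0^+$ forces $\mathrm{Ric}\equiv 0$ on $\mathcal{C}^{\mathrm{reg}}$. Hence (with the complex structure coming from the Donaldson--Sun/Liu--Sz\'ekelyhidi algebraic structure) $\mathcal{C}^{\mathrm{reg}}$ is Ricci-flat K\"ahler, and the Chern connection on $K_{\mathcal{C}^{\mathrm{reg}}}$ is a flat unitary connection whose holonomy is a character $\chi:H_1(\mathcal{C}^{\mathrm{reg}},\mathbb{Z})\to U(1)$.

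Next, apply Theorem~\ref{main thm-unrescaled version} at the vertex $o$ of $\mathcal{C}$ to obtain constants $t,C>0$ such that the image of $H_1(B_t(o)\cap\mathcal R,\mathbb Z)\to H_1(B_1(o)\cap\mathcal R,\mathbb Z)$ has exponent at most $C$. Any loop $\gamma\subset\mathcal{C}^{\mathrm{reg}}$ is homotopic in $\mathcal{C}^{\mathrm{reg}}$ (via the continuous family $s\mapsto\Phi_s\circ\gamma$, $s\in(0,1]$) to a loop inside $B_t(o)\cap\mathcal R$ after taking $s$ sufficiently small, because $\Phi_s$ preserves the regular locus. Consequently $C\,[\gamma]=0$ in $H_1(\mathcal{C}^{\mathrm{reg}},\mathbb{Z})$, so $\chi([\gamma])^C=1$. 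Therefore $\chi$ takes values in a finite subgroup of $U(1)$, and for some $N\mid C$ the flat bundle $K_{\mathcal{C}^{\mathrm{reg}}}^{\otimes N}$ is holomorphically trivial, admitting a parallel trivializing section $\Omega$ of constant pointwise norm $|\Omega|_g=c>0$.

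Finally, normality of $\mathcal{C}$ together with $\mathrm{codim}\,\mathcal S\ge 2$ lets $\Omega$ extend to a holomorphic section $\hat\Omega$ of the reflexive sheaf $K_{\mathcal{C}}^{[N]}$. The natural $\mathbb{C}^*$-action on $\mathcal{C}$ makes $\hat\Omega$ equivariant of a definite weight, and this weight-homogeneity prevents $\hat\Omega$ from vanishing at the vertex, so $K_{\mathcal{C}}^{[N]}$ is Cartier near $o$. The same argument at any other singular point (reapplying Theorem~\ref{main thm-unrescaled version} centered there) yields the $\mathbb Q$-Gorenstein property throughout $\mathcal{C}$. The analytic klt criterion then reduces to
\[\int_U|\hat\Omega|^{2/N}=\int_U|\Omega|_g^{2/N}\,d\mathrm{vol}_g=c^{2/N}\,\mathrm{Vol}(U)<\infty\]
for any bounded open $U\subset\mathcal{C}$, which holds because spaces in $\mathcal M(n,v)$ have locally finite volume.

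The analytic input---producing $\Omega$ with constant pointwise norm---is essentially immediate from Theorem~\ref{main thm-unrescaled version} and Ricci-flatness. The main difficulty lies in the algebraic bookkeeping of the third step: ensuring that $\hat\Omega$ genuinely generates $K_{\mathcal{C}}^{[N]}$ as a line bundle (rather than vanishing along the singular set), and rigorously identifying the analytic $L^{2/N}$-integrability with the algebraic notion of klt. The cone's $\mathbb{C}^*$-equivariance, together with the normality results of Donaldson--Sun and Liu--Sz\'ekelyhidi, is the essential tool for making this bookkeeping precise.
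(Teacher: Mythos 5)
Your argument is essentially the paper's: Ricci-flatness of $\mathcal{C}^{\mathrm{reg}}$ makes the canonical bundle flat, Theorem~\ref{main thm-unrescaled version} (globalized over the cone by the dilation homotopy, exactly as you do) kills the holonomy character up to a bounded exponent, producing a parallel, hence holomorphic, unit-norm section $\Omega$ of $K_{\mathcal{C}^{\mathrm{reg}}}^{\otimes N}$, and then the finiteness of $\int_{B\cap\mathcal R}(\Omega\wedge\bar\Omega)^{1/N}$ gives klt; the paper simply delegates your final ``bookkeeping'' step to \cite[Lemma 6.4]{EGZ}. The one place where your write-up is not correct as stated is the claim that $\mathbb{C}^*$-equivariance (``weight-homogeneity'') prevents $\hat\Omega$ from vanishing at the vertex: a homogeneous section of positive weight can perfectly well vanish at the fixed point, so this is not an argument. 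What you actually need is that $\hat\Omega$ generates the rank-one reflexive sheaf $K_{\mathcal{C}}^{[N]}$, and this follows without any equivariance from the standard fact that a map of reflexive sheaves $\mathcal{O}_{\mathcal{C}}\to K_{\mathcal{C}}^{[N]}$, $1\mapsto\hat\Omega$, which is an isomorphism off a codimension-two set (here: off $\mathcal S$, since $\Omega$ is nowhere zero on $\mathcal{C}^{\mathrm{reg}}$), is an isomorphism; alternatively, cite \cite[Lemma 6.4]{EGZ} as the paper does, which packages the extension, the $\mathbb{Q}$-Gorenstein property, and the $L^{2/N}$ criterion in one statement.
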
 

This result is independently proved by Hallgren-Sz\'ekelyhidi using different method \cite{HS}.
A consequence of Corollary \ref{cor-pi1 of cone} is that the 2-step degeneration theory for local singularities in \cite{DS15} can be extended from the case of K\"ahler-Einstein metrics to the case of K\"ahler metrics with bounded Ricci curvature. 
 Furthermore, one can generalize \cite[Proposition 4.15]{DS12} for  K\"ahler-Einstein metrics to the case of bounded Ricci curvature. Namely, 
let $Z$ be a pointed Gromov-Hausdorff limit of a sequence of non-collapsed \emph{polarized} K\"ahler manifolds with bounded Ricci curvature,  by \cite{DS12,DS15}, we know that $Z$ is a normal complex analytic space. Using Corollary \ref{cor-pi1 of cone} and H\"ormander's $L^2$-technique, we obtain that 
\begin{corollary}\label{cor--klt in general}
   $Z$ has log terminal singularities.
\end{corollary}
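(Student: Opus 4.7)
The plan is to combine Corollary \ref{cor-pi1 of cone} (every tangent cone $\mathcal{C}$ of $Z$ is klt) with a local algebraic degeneration of $(Z, z)$ to $(\mathcal{C}, o)$, and then transfer the klt property from the central fiber to nearby fibers using the openness of klt in flat $\mathbb{Q}$-Gorenstein families.

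First I would set up the uniform H\"ormander $L^2$ construction of peak holomorphic sections of high tensor powers $L_i^k$ on the polarized smoothings $(M_i, L_i, \omega_i)$, following the framework of \cite{DS12, DS15}. The topological obstruction to such a uniform construction, identified in Section~5.1 of \cite{DS12} as coming from the holonomy of flat connections on the complement of the singular set of a tangent cone, is eliminated a priori by Theorem \ref{main thm-unrescaled version}: the local $H_1$ of the regular loci has bounded exponent. Passing to Gromov-Hausdorff limits and using normality of $Z$, the peak sections descend to $Z$ (and likewise to $\mathcal{C}$), yielding compatible local embeddings of $(Z, z)$ and $(\mathcal{C}, o)$ into a common affine space together with a $\mathbb{C}^*$-equivariant flat family $\mathcal{X} \to \Delta$ interpolating between them. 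This is the extension to bounded Ricci curvature of the two-step degeneration of \cite{DS15}, as flagged in the discussion after Corollary \ref{cor-pi1 of cone}.

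With the degeneration in place, Corollary \ref{cor-pi1 of cone} gives that the central fiber $\mathcal{C}$ is klt, so in particular $K_{\mathcal{C}}$ is $\mathbb{Q}$-Cartier; the $\mathbb{C}^*$-equivariance of the family lets one extend this to a $\mathbb{Q}$-Gorenstein structure across $\Delta$. Standard openness of klt in $\mathbb{Q}$-Gorenstein flat families (via semicontinuity of log discrepancies) then implies that the generic fiber, a neighborhood of $z$ in $Z$, is also klt. Since $z \in Z$ was arbitrary, $Z$ has klt singularities. The main obstacle is executing the uniform H\"ormander construction and verifying the $\mathbb{Q}$-Gorenstein property of the resulting family in the bounded Ricci setting, where the Einstein equation is not available as a shortcut; Theorem \ref{main thm-unrescaled version} is the decisive input that unblocks the construction, after which the final transfer of klt follows from standard deformation theory.
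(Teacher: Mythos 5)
Your route is genuinely different from the paper's, but it has a gap at its decisive step. To pass klt-ness from the central fiber $\mathcal{C}$ to the nearby fiber (the germ of $Z$ at $z$) you invoke openness of klt, i.e.\ inversion of adjunction: this requires the \emph{total space} $\mathcal{X}$ of the family to be $\mathbb{Q}$-Gorenstein (so that $(\mathcal{X},\mathcal{X}_0)$ is plt near $\mathcal{X}_0$ and discrepancies can be compared). That hypothesis does not follow from the central fiber being klt, nor from $\mathbb{C}^*$-equivariance: already for the quotient singularity $\tfrac14(1,1)$ (the cone over the rational normal quartic), which is klt and carries a $\mathbb{C}^*$-action, there are flat equivariant deformations whose total space is not $\mathbb{Q}$-Gorenstein. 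So the sentence ``the $\mathbb{C}^*$-equivariance of the family lets one extend this to a $\mathbb{Q}$-Gorenstein structure across $\Delta$'' is an assertion, not an argument, and it is essentially equivalent in difficulty to what you are trying to prove: knowing that $K_{\mathcal{X}/\Delta}$ (equivalently $K_Z$ near $z$) is $\mathbb{Q}$-Cartier is the heart of the klt statement. A secondary issue is that the flat $\mathbb{C}^*$-equivariant degeneration of $(Z,z)$ to $(\mathcal{C},o)$ in the bounded-Ricci setting is itself only flagged in this paper as a consequence of Corollary \ref{cor-pi1 of cone}; it is not available off the shelf, so your argument rests on heavy unproved machinery even before the $\mathbb{Q}$-Gorenstein problem.

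For contrast, the paper argues germ by germ and never needs a family: Corollary \ref{cor-pi1 of cone} gives, for every tangent cone at $p$, a parallel (hence holomorphic, unit-norm) section of $K^{\otimes m}_{\mathcal{C}^{reg}}$; following \cite[Section 3.2]{DS12} this is transplanted to approximately holomorphic sections $\sigma_{t,i}$ of $K^{\otimes m}$ on the smooth polarized approximants near $p$, which are corrected to genuinely holomorphic sections $s_{t,i}=\sigma_{t,i}-\tau_{t,i}$ by the H\"ormander $L^2$-estimate (using the local K\"ahler potential from \cite[Proposition 3.1]{LiuSz1} to get pseudoconvex domains). Passing to the limit in $i$ (using $C^{1,\alpha}_{loc}$ convergence from the two-sided Ricci bound) produces a holomorphic section $s_t$ of $K^{\otimes m}_{Z^{reg}}$ near $p$, nowhere vanishing on a smaller ball, with $\int (s_t\wedge\bar s_t)^{1/m}<\infty$; then \cite[Lemma 6.4]{EGZ} gives klt near $p$ directly, with the $\mathbb{Q}$-Gorenstein property coming for free from the section itself. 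If you want to salvage your approach, you would need an independent proof that the degeneration is a $\mathbb{Q}$-Gorenstein family, which the analytic construction above effectively supplies; as written, the proposal does not.
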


Lastly, in the Riemannian setting,  Theorem~\ref{main thm-unrescaled version}, together with the results from~\cite{Pan-Wei}, gives the following global statement.

\begin{corollary}\label{cor--compact version}
For any \( v, A > 0 \), there exists a constant \( C(n, v, A) \) with the following property. Let \( X \) be a Gromov–Hausdorff limit of closed Riemannian manifolds \( (M^n_i, g_i) \) satisfying
\[
\frac{1}{A} \leq \mathrm{Ric}_{g_i} \leq A \quad \text{and} \quad \mathrm{Vol}(M_i) \geq v.
\]
Then the group \( H_1(X^{\mathrm{reg}}, \mathbb{Z}) \) has exponent bounded by \( C \).
\end{corollary}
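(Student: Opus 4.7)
The plan is to deduce the global statement from the local Theorem~\ref{main thm-unrescaled version} by a finite-cover argument, after first upgrading the hypotheses so that the local theorem applies uniformly.

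Under the assumption $\mathrm{Ric}_{g_i}\geq 1/A>0$, Myers' theorem bounds $\mathrm{diam}(M_i)$ by some $D(n,A)$. Rescaling each metric uniformly so that $|\mathrm{Ric}|\leq n-1$ keeps the diameter bounded in terms of $n,A$, and Bishop--Gromov combined with $\mathrm{Vol}(M_i)\geq v$ yields $v'=v'(n,v,A)>0$ satisfying $\underline{\mathrm{Vol}}(B_r(x))\geq v'$ for all $x\in M_i$ and $r\in(0,1]$. Passing to the limit, $(X,d,x)\in\mathcal{M}(n,v')$ for every $x\in X$, and $X$ is compact with bounded diameter. Applying Theorem~\ref{main thm-unrescaled version} with parameters $(n,v')$ produces $t=t(n,v,A)$ and $C=C(n,v,A)$ such that at every $x\in X$ the image of $H_1(B_t(x)\cap\mathcal{R},\mathbb{Z})$ in $H_1(B_1(x)\cap\mathcal{R},\mathbb{Z})$ is $C$-torsion. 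The diameter bound and non-collapsing allow us to cover $X$ by balls $B_{t/2}(p_1),\ldots,B_{t/2}(p_N)$ with $N=N(n,v,A)$. Since $B_1(p_j)\cap\mathcal{R}\subset X^{\mathrm{reg}}$, the image of each $H_1(B_t(p_j)\cap\mathcal{R},\mathbb{Z})$ in $H_1(X^{\mathrm{reg}},\mathbb{Z})$ is again $C$-torsion.

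It remains to establish
\[
H_1(X^{\mathrm{reg}},\mathbb{Z})=\sum_{j=1}^{N}\mathrm{Im}\bigl(H_1(B_t(p_j)\cap\mathcal{R},\mathbb{Z})\to H_1(X^{\mathrm{reg}},\mathbb{Z})\bigr),
\]
after which $H_1(X^{\mathrm{reg}},\mathbb{Z})$, being a finite sum of $C$-torsion subgroups, will have exponent at most $C$. The decomposition is a standard small-chains/Mayer--Vietoris statement: since the singular set $\mathcal{S}$ has Hausdorff codimension at least $2$ in $X$ by Cheeger--Colding (in fact $\geq 4$ under bounded Ricci by Cheeger--Naber), $X^{\mathrm{reg}}$ is a connected open manifold and every $1$-cycle in it sits at positive distance from $\mathcal{S}$. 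One subdivides its simplices to be smaller than a Lebesgue number of the cover $\{B_{t/2}(p_j)\cap X^{\mathrm{reg}}\}$, and reassembles the pieces into local cycles via basepoints and short connecting paths drawn inside $X^{\mathrm{reg}}$.

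The main obstacle is this last step: performing the subdivision and rerouting inside $X^{\mathrm{reg}}$ with a uniform control on the cover --- in particular, refining the cover so that its pairwise intersections with $X^{\mathrm{reg}}$ are path-connected and small loops sitting near $\mathcal{S}$ become contractible in only slightly larger neighborhoods. This is precisely where the results of \cite{Pan-Wei} (together with \cite{Wang2021,Wang2022}) enter: they establish the uniform semi-local simple connectedness of non-collapsed Ricci limit spaces and the structural information on $\pi_1$ needed to carry out the decomposition argument, closing the passage from the local exponent bound of Theorem~\ref{main thm-unrescaled version} to the global one for $H_1(X^{\mathrm{reg}},\mathbb{Z})$.
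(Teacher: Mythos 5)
Your reduction steps (Myers' diameter bound, rescaling, Bishop--Gromov to get uniform local non-collapsing, the uniform local exponent bound from Theorem~\ref{main thm-unrescaled version}, and a finite cover of controlled cardinality) are all fine, but the identity you need,
\[
H_1(X^{\mathrm{reg}},\mathbb{Z})=\sum_{j=1}^{N}\mathrm{Im}\bigl(H_1(B_t(p_j)\cap\mathcal{R},\mathbb{Z})\to H_1(X^{\mathrm{reg}},\mathbb{Z})\bigr),
\]
is false as stated, and no amount of subdivision, rerouting, or semi-local simple connectedness will produce it. When you subdivide a $1$-cycle and close up the pieces with basepoints and connecting paths, the resulting loops are not contained in single balls of the cover (they carry the long connecting paths); the honest Mayer--Vietoris/nerve bookkeeping leaves an extra contribution coming from $H_1$ of the nerve, i.e.\ from the global topology of $X$. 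Concretely, the round $\mathbb{RP}^3$ (a smooth, legitimate member of the class for suitable $v,A$, with $X^{\mathrm{reg}}=X$) has $H_1=\mathbb{Z}_2\neq 0$, while for small $t$ every ball $B_t(p_j)$ is contractible, so the right-hand side of your identity vanishes. A decomposition into small loops is only available for cycles that already bound in the \emph{whole} space $X$ (singular set included): that is exactly the hypothesis of Lemma~\ref{small loops}, whose proof starts from a bounding $2$-chain in $B_1(p)$ and pushes it into the effective regular set.

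The missing ingredient is the global step the paper performs first: by Bonnet--Myers and \cite[Corollary 7.1]{Pan-Wei}, $|\pi_1(X)|\leq C_0(n,v,A)$. Given a loop $\gamma$ in $X^{\mathrm{reg}}$, the power $\gamma^{C_0!}$ lies in the kernel of $\pi_1(X^{\mathrm{reg}})\to\pi_1(X)$, hence is contractible in $X$; only then does Lemma~\ref{small loops} apply, writing the class of $\gamma^{C_0!}$ in $H_1(X^{\mathrm{reg}},\mathbb{Z})$ as a sum of classes of loops contained in balls of radius $t$ inside the regular part, after which the local exponent bound of Theorem~\ref{main thm-unrescaled version} finishes the argument, with final exponent controlled by the product of the global bound $C_0$ and the local bound, not by the local bound alone. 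So the role of \cite{Pan-Wei} here is to bound $\pi_1(X)$ itself, not to justify a Mayer--Vietoris decomposition; your appeal to uniform semi-local simple connectedness does not address the obstruction coming from loops that are homologically nontrivial in $X$.
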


The main idea behind the proof of Theorem \ref{main} is to perform an induction on the volume ratio lower bound $v$. By Anderson’s $\epsilon$-regularity theorem, the statement holds when $v$ is close to 1. For a loop contained in the unit ball and lying in the regular part, we first show that, up to a bounded index, it is homotopically trivial within a ball of fixed large radius that contains the singular set. This can be seen as a local effective version of Anderson’s classical result \cite{And90} that a complete Riemannian manifold with non-negative Ricci curvature and Euclidean volume growth has a finite fundamental group. As a consequence, the given loop is homotopic to a sum of small loops within the regular part. Since we are passing to smaller balls, we expect a jump in the volume ratio—unless the space is close to a metric cone, by the “almost volume cone implies almost metric cone” theorem \cite{CC96}. The use of the effective regular set, along with an effective version of the loop decomposition theorem (see Lemma \ref{small loops}), allows us to carry out this idea rigorously via a contradiction argument. We remark that the reason we only obtain an exponent bound for the local homology group—rather than an exponent for the local fundamental group or a direct order bound on the local homology group—is due to the non-commutativity of the fundamental group and when passing to smaller balls, we lose uniform control over how many are needed.

In Section \ref{sec--preliminary}, we collect fundamental definitions and properties related to the effective regular set for non-collapsed Ricci limit spaces.
In Section \ref{proof of the main theorem}, we present the proofs of Theorem \ref{main thm-unrescaled version} and Theorem \ref{main}. We first establish Theorem \ref{weak main}. Then we prove a loop decomposition Lemma \ref{small loops}. Using a rescaling argument based on Lemma \ref{small loops}, we derive Theorem \ref{main} from Theorem \ref{weak main}. Theorem \ref{main} is an easy consequence of Theorem \ref{main thm-unrescaled version}.
In Section \ref{sec--applications}, we prove the corollaries.
In Section \ref{sec--discussion}, we list some problems for further study.

\subsection*{Acknowledgement} The authors thank Max Hallgren and G\'abor Sz\'ekelyhidi for sharing their draft.
The third author thanks  G\'abor Sz\'ekelyhidi and Valentino Tosatti for helpful discussions on this topic. Part of the work was done when the authors were at the SLMath (MSRI) during Fall 2024 (partially supported by the NSF Grant DMS-1928930).

\section{Preliminaries}\label{sec--preliminary}

In this section we collect some known results on $\mathcal M(n, v)$. First, we note that the set \(\mathcal{R}_{\epsilon} \) is scaling-invariant. Indeed, if we scale the distance on \( X \) by \( \lambda > 0 \), then an \( (\epsilon, \delta) \)-regular points becomes an \( (\epsilon, \lambda \delta) \)-regular point with respect to the rescaled metric.

The following two results depend crucially on the Ricci upper bound hypothesis.

\begin{theorem}[\cite{A90,CC97}]\label{epsilon regularity}
    There exists an $\epsilon(n)>0$ such that for $X\in \mathcal{M}(n,v)$ (for any $n\geq 1$ and $v>0$),  we have
    \begin{equation}
\mathcal{R}_{\epsilon(n)}=\mathcal{R},
    \end{equation}and if $p\in \mathcal R_{\epsilon,1}$, then $B_{1/2}(p)\subset \mathcal R$.
    Moreover, $\mathcal{R}$ is a smooth manifold endowed with a $C^{1, \alpha}$ Riemannian metric.
\end{theorem}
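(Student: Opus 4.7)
The plan is to combine Anderson's harmonic-radius estimate \cite{A90} with the almost-rigidity theory of Cheeger--Colding \cite{CC96, CC97}. The analytic engine is the following: for a smooth Riemannian $n$-manifold $(M,g)$ with $|\mathrm{Ric}(g)|\leq n-1$, if a ball $B_r(p)$ is $\eta r$-Gromov--Hausdorff close to $B_r^n(0)$ and satisfies a non-collapsing volume bound, then there is a harmonic coordinate chart on $B_{r/2}(p)$ in which the metric tensor is $C^{1,\alpha}$-controlled by a modulus $\Psi(\eta)\to 0$ as $\eta\to 0$, with estimates independent of $(M,g)$.

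First I would use this to upgrade approximating sequences. Given $(X,d,p)\in\mathcal{M}(n,v)$ and a point satisfying the GH-closeness assumption at some scale, I pull back the harmonic charts along a smooth approximating sequence and extract an Arzel\`a--Ascoli limit in $C^{1,\alpha/2}$. This simultaneously produces the $C^{1,\alpha}$ Riemannian manifold structure on $\mathcal{R}$ and, when $p\in\mathcal{R}_{\epsilon,1}$, yields a chart whose image covers $B_{1/2}(p)$, giving the inclusion $B_{1/2}(p)\subset\mathcal{R}$ (the loss from radius $1$ to $1/2$ being exactly the Anderson chart-size loss).

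For the equality $\mathcal{R}_{\epsilon(n)}=\mathcal{R}$, I would prove self-improvement of GH-closeness across scales. The inclusion $\mathcal{R}\subset\mathcal{R}_{\epsilon(n)}$ is immediate. For the converse, given $p\in\mathcal{R}_{\epsilon(n),\delta}$, Colding's volume convergence (inside the Cheeger--Colding theory) combined with Bishop--Gromov forces $\underline{\vol}(B_r(p))\geq 1-\Psi(\epsilon(n))$ for every $r\leq\delta$. The almost-volume-cone implies almost-metric-cone theorem \cite{CC96}, together with non-collapsing and the near-maximal volume ratio, then forces every $B_r(p)$ with $r\leq\delta$ to be GH-close to a ball in $\mathbb{R}^n$ itself. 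Taking $\epsilon(n)$ small enough makes this quantitatively self-improving: an $\epsilon(n)$-regular point at scale $\delta$ becomes $\epsilon'$-regular at all smaller scales for every $\epsilon'>0$, so $p\in\mathcal{R}$.

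\textbf{Main obstacle.} The heart of the argument is the analytic step of producing harmonic coordinates in which the metric is $C^{1,\alpha}$-close to Euclidean. In such coordinates $g_{ij}$ satisfies the quasi-linear elliptic system $\Delta_g g_{ij}=-2\mathrm{Ric}_{ij}+Q(g,\partial g)$; the hypothesis $|\mathrm{Ric}|\leq n-1$ yields the $L^\infty$ bound on the inhomogeneous term and Schauder theory then delivers the $C^{1,\alpha}$ estimate \emph{provided} the coordinates are already $C^0$-close to Euclidean. Producing such coordinates requires applying the Cheeger--Colding almost-splitting theorem $n$ times to obtain harmonic functions with small Hessian and nearly orthonormal gradients; this in turn uses the segment inequality and crucially the non-collapsing hypothesis. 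Carrying out all these steps with scale-invariant, quantitative estimates compatible with the definition of $\mathcal{R}_{\epsilon,\delta}$ is the technical core of the proof.
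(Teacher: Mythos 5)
The paper offers no proof of this statement --- it is quoted directly from Anderson \cite{A90} and Cheeger--Colding \cite{CC97} --- so your sketch should be measured against the standard argument in those references, and your main line (Anderson's harmonic-radius estimate under $|\mathrm{Ric}|\leq n-1$ plus non-collapsing, charts pulled back along the approximating sequence, Arzel\`a--Ascoli in $C^{1,\alpha'}$ giving the $C^{1,\alpha}$ structure on the limit, with the radius loss from $1$ to $1/2$ coming from the chart size) is exactly that argument and is fine, including your identification of the technical core (almost-splitting to produce the harmonic almost-coordinates, then Schauder for the system $\Delta_g g_{ij}=-2\mathrm{Ric}_{ij}+Q(g,\partial g)$).

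The genuine gap is in your argument for $\mathcal{R}_{\epsilon(n)}\subset\mathcal{R}$. The chain ``GH-closeness at scale $\delta$ $\Rightarrow$ $\underline{\vol}(B_r(p))\geq 1-\Psi(\epsilon(n))$ for $r\leq\delta$ (Colding plus Bishop--Gromov) $\Rightarrow$ every smaller ball is GH-close to Euclidean'' only returns closeness $\Psi(\epsilon(n))\,r$ at every smaller scale; it is not ``quantitatively self-improving,'' because the volume density at $p$ is merely bounded below by $1-\Psi(\epsilon(n))$ and need not tend to $1$ as $r\to 0$, so no iteration drives the closeness to an arbitrary $\epsilon'$. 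Indeed this part of your argument never uses the Ricci upper bound, and without it the conclusion is false: a two-dimensional cone of angle $2\pi(1-\eta)$ with $\eta$ small is a non-collapsed limit with $\mathrm{Ric}\geq 0$ whose vertex lies in $\mathcal{R}_{\Psi(\eta),\delta}$ for all $\delta$ yet is singular, with density $1-\eta$ at every scale. The correct route to the equality is the one you already set up in your first step: if $p\in\mathcal{R}_{\epsilon(n),\delta}$, Anderson's estimate (applied at scale $\delta$ and passed to the limit) gives a harmonic chart on $B_{c\delta}(p)$ in which the metric tensor is $C^{1,\alpha}$; continuity of $g_{ij}$ in these coordinates then forces $d_{GH}(B_r(q),B^n_r(0))=o(r)$ as $r\to 0$ for every $q$ in the chart, i.e.\ $B_{c\delta}(p)\subset\mathcal{R}$, which yields both the second assertion and $\mathcal{R}_{\epsilon(n)}\subset\mathcal{R}$ (the reverse inclusion being definitional). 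The upper Ricci bound enters precisely through this elliptic regularity step, not through volume monotonicity, so the separate volume-based argument should be deleted rather than repaired.
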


\begin{theorem}[\cite{CN15}] \label{CN}
For $(X, d, p) \in \mathcal{M}(n, v)$, every tangent cone at a point $q\in \mathcal S$ is of the form $\mathbb R^{n-k}\times C(Y)$ for $k\geq 4$, where $C(Y)$ does not split any line.
\end{theorem}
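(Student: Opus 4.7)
The plan is to combine three ingredients: the volume cone implies metric cone theorem, the Cheeger--Colding splitting theorem applied iteratively, and the codimension four singularity bound whose establishment is the technical heart of \cite{CN15}.

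First, I would verify that every tangent cone at $q$ is a metric cone. By Bishop--Gromov (using the lower Ricci bound) the function $r \mapsto \vol(B_r(q))/r^n$ is monotone non-increasing, and by non-collapsing it is bounded below by $v$, so it has a positive limit as $r \to 0^+$. Along any rescaling sequence $r_i \to 0$ defining a tangent cone $C$, the volume ratios at the basepoint therefore stabilize. The almost-volume-cone implies almost-metric-cone theorem then forces $C$ to be a metric cone over a compact cross-section.

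Second, I would iterate the splitting theorem. If $C$ contains a line, then $C = \mathbb{R} \times C'$, and since $C$ is a metric cone, so is $C'$. Continuing until no further splitting is possible produces $C = \mathbb{R}^{n-k} \times C(Y)$, where $C(Y)$ is a $k$-dimensional metric cone not splitting any line. The cases $k = 0, 1$ are eliminated immediately: $k = 0$ gives $C = \mathbb{R}^n$, and Anderson's two-sided $\epsilon$-regularity (Theorem \ref{epsilon regularity}) then forces $q \in \mathcal{R}$, contradicting $q \in \mathcal{S}$; $k = 1$ would make $C(Y)$ a one-dimensional cone, i.e.\ a half-line, so $C$ would in fact split off another $\mathbb{R}$, contradicting the choice of $k$.

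The hard part, and the whole point of the theorem, is ruling out $k = 2$ and $k = 3$. My approach would follow \cite{CN15}: introduce the quantitative stratification $\mathcal{S}^j_{\eta, r}$ consisting of points that are not $(j+1, \eta)$-symmetric on any scale $\geq r$; prove a quantitative cone-splitting lemma showing that near-symmetries along independent directions combine to give a higher-order symmetry; and then establish the crucial $\epsilon$-regularity upgrade which, under the \emph{two-sided} Ricci bound, promotes $(n-3, \eta)$-symmetry for small $\eta$ to full regularity. Combined with a Reifenberg-type covering argument, this dimension-reduction forces the non-splitting factor to have dimension at least $4$. The main obstacle is the upgraded $\epsilon$-regularity: the Ricci \emph{upper} bound is indispensable here, as only a codimension-two result is available from lower Ricci bounds alone, and this is the step where \cite{CN15} departs genuinely from earlier work.
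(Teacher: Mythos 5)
The paper gives no proof of Theorem \ref{CN}; it is quoted directly from \cite{CN15}, so your sketch can only be measured against the published argument. Your roadmap matches it: tangent cones are metric cones by Bishop--Gromov monotonicity of the density plus ``almost volume cone implies almost metric cone'' \cite{CC96}; a maximal splitting writes $C=\mathbb{R}^{n-k}\times C(Y)$; and the real content is the $\epsilon$-regularity theorem of \cite{CN15}, which under two-sided Ricci bounds and non-collapsing promotes $(n-3)$-symmetry on a ball to a definite harmonic/regularity radius, so no tangent cone at a singular point can split an $\mathbb{R}^{n-3}$ factor. Deferring that theorem is acceptable here, since the paper itself treats it as a black box.

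There is, however, a genuine error in your elimination of $k=1$. A one-dimensional metric cone containing no line is a ray, so in that case $C\cong\mathbb{R}^{n-1}\times[0,\infty)$ is a half-space, and a half-space does \emph{not} split off a further line; your claimed contradiction with the maximality of the splitting is vacuous. Excluding half-space (boundary-type) tangent cones is a real theorem --- the absence of codimension-one singularities for non-collapsed Ricci limits \cite{CC97} --- not a formal consequence of the splitting theorem. The cleanest fix in the present setting is to note that the $\epsilon$-regularity of \cite{CN15} you already invoke for $k=2,3$ applies verbatim whenever the tangent cone splits $\mathbb{R}^{n-3}$, hence rules out $k=0,1,2,3$ simultaneously and makes the separate low-$k$ discussion unnecessary (for $k=0$ Anderson's $\epsilon$-regularity, Theorem \ref{epsilon regularity}, suffices as you say). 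A smaller slip: a non-increasing function bounded below has a limit as $r\to\infty$, not as $r\to0^{+}$; the existence of the density $\lim_{r\to0}\underline{\vol}(B_r(q))$ follows because the (almost) monotone ratio is also bounded above near $0$ by the Bishop inequality, after which the argument proceeds as you describe.
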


In the rest of this section we will fix $\epsilon=\frac{\epsilon(n)}{2}$, where $\epsilon(n)$ is given by Theorem \ref{epsilon regularity} and list some basic properties of the effective regular set.

\begin{lemma}\label{close regular points}
There exists $c(n)\leq 1$ such that 
\begin{itemize}
    \item [(1).] if $(M,p), (N,q)\in \mathcal{M}(n,v)$ for some $v>0$ and satisfy $d_{\mathrm{pGH}}((M,p),(N,q)) \le cr$, and $p \in \mathcal{R}_{\epsilon,r}$ for some $r\in (0,1]$, then $q \in \mathcal{R}_{\epsilon,cr}$;
    \item [(2).] if \( (M, p)\in \mathcal M(n,v) \) for some $v>0$ and \( p \in \mathcal{R}_{\epsilon, r} \) for some \( r \in (0, 1] \), then for any \( q \in B_{r/2}(p) \), we have \( q \in \mathcal{R}_{\epsilon, c r} \).
    
\end{itemize}
 
\end{lemma}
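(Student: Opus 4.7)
The plan is a direct quantitative argument: Theorem \ref{epsilon regularity} furnishes smooth regularity (with Anderson-type $C^{1,\alpha}$ harmonic coordinate estimates) at $p$ at scale $1$, and this smoothness is transferred to $q$ via a bridging comparison at an intermediate scale. The essential tool is that $\epsilon = \epsilon(n)/2$ sits strictly below the Anderson threshold $\epsilon(n)$, leaving a definite buffer to absorb perturbative errors. By the scale invariance of $\mathcal{R}_{\epsilon,\cdot}$, I normalize $r = 1$ throughout.

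For (1), fix $c \le 1$ to be determined, and consider $(M,p), (N,q)$ with $p \in \mathcal{R}_{\epsilon, 1}$ and $d_{\mathrm{pGH}}((M,p),(N,q)) \le c$. Introduce an intermediate scale $\tau = 3c/\epsilon$. Standard GH comparison at scale $\tau$ gives $d_{GH}(B_\tau(q), B_\tau(p)) \le C_0 c$ for some absolute $C_0$, and combined with the scale-$\tau$ regularity of $p$,
\[
d_{GH}(B_\tau(q), B^n_\tau(0)) \le d_{GH}(B_\tau(q), B_\tau(p)) + d_{GH}(B_\tau(p), B^n_\tau(0)) \le C_0 c + \epsilon \tau < \epsilon(n)\tau
\]
provided $c$ is small enough. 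Theorem \ref{epsilon regularity} applied to $N$ at scale $\tau$ then yields smoothness on $B_{\tau/2}(q)$ with quantitative curvature bound $|\mathrm{Rm}|_N \le C(n)/\tau^2$ and uniform $C^{1,\alpha}$ harmonic coordinates. At any finer scale $\sigma \le c \ll \tau$, the $C^{1,\alpha}$ comparison in these coordinates gives $d_{GH}(B_\sigma(q), B^n_\sigma(0)) \le C(n)\, \sigma (\sigma/\tau)^\alpha$; since $\tau \sim c$ and $\sigma \le c$, this is at most $C'(n)\,\sigma \cdot (\sigma/c)^\alpha \le \epsilon \sigma$ once $c = c(n)$ is taken small enough. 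Thus $q \in \mathcal{R}_{\epsilon, c}$.

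For (2), Theorem \ref{epsilon regularity} applied directly to $p \in \mathcal{R}_{\epsilon, 1}$ yields $B_{1/2}(p) \subset \mathcal{R}$ with uniform $C^{1,\alpha}$ harmonic coordinate bounds and $|\mathrm{Rm}| \le C(n)$ in a scale-invariant sense. For any $q \in B_{1/2}(p)$ and any scale $\sigma \le c(n)$, the same $C^{1,\alpha}$ metric comparison in such coordinates gives $d_{GH}(B_\sigma(q), B^n_\sigma(0)) \le C'(n)\, \sigma^{1+\alpha} \le \epsilon \sigma$ once $c(n)$ is small enough, establishing $q \in \mathcal{R}_{\epsilon, c}$.

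The main obstacle is the bookkeeping of constants: the intermediate scale $\tau$ must be chosen so that $C_0 c + \epsilon \tau < \epsilon(n)\tau$ (which is precisely what the factor-of-two buffer $\epsilon = \epsilon(n)/2$ makes possible), and then $c = c(n)$ must be taken small enough that the $C^{1,\alpha}$ comparison at every scale $\sigma \le c$ falls strictly below $\epsilon \sigma$. The final universal constant $c(n) \le 1$ emerges from combining both requirements together with the dimensional Anderson constants.
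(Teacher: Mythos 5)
Your route is genuinely different from the paper's: the paper proves both statements by a soft compactness/contradiction argument (take a sequence of counterexamples, pass to a pointed GH limit, observe the limit point is regular, rescale and invoke the Reifenberg-type theorem of \cite{CC97}), which sidesteps all constant-chasing, whereas you attempt a direct quantitative proof via Anderson's $\epsilon$-regularity and harmonic coordinates, exploiting the buffer $\epsilon=\epsilon(n)/2<\epsilon(n)$. That strategy is viable in principle, but as written it has concrete problems. First, a two-sided Ricci bound does \emph{not} give pointwise curvature bounds: the assertions $|\mathrm{Rm}|_N\le C(n)/\tau^2$ and $|\mathrm{Rm}|\le C(n)$ are false in this setting; what Anderson's theorem (and its limit version) gives is a lower bound on the harmonic radius together with scale-invariant $C^{1,\alpha}$ (indeed $W^{2,p}$) control of the metric in harmonic coordinates. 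Your argument only uses the latter, so this is repairable, but it must be restated correctly, and note that you are invoking this quantitative harmonic-radius content of \cite{A90,CC97} rather than the literal statement of Theorem \ref{epsilon regularity}; in part (2) you additionally need the (standard, but unproved here) fact that every point of $B_{1/2}(p)$ has harmonic radius bounded below by a dimensional constant, which requires Colding's volume convergence plus Bishop--Gromov before Anderson's gap theorem can be applied at $q$.

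Second, and more seriously, the closing estimate in part (1) does not follow. The inequality you need, $C'(n)\,\sigma(\sigma/c)^{\alpha}\le\epsilon\sigma$ for all $\sigma\le c$, is scale-invariant: at $\sigma=c$ it reads $C'(n)\le\epsilon$, which is independent of $c$, so ``taking $c=c(n)$ small enough'' cannot make it true. The gain has to come from the ratio $\sigma/\tau$ between the working scale and the scale at which you have harmonic-coordinate control, i.e.\ you must keep $\tau$ in the estimate and choose the intermediate scale $\tau=K(n)c$ with $K(n)$ \emph{large} compared to the harmonic-coordinate constants divided by $\epsilon$ (and with $K(n)\ge 1/c_0(n)$ so that balls of radius $\sigma\le c$ lie inside the chart of radius $c_0(n)\tau$, and $c\le 1/K(n)$ so that $\tau\le 1$). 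Your specific choice $\tau=3c/\epsilon$ is not shown to be large enough: even keeping $\sigma/\tau$ one only gets $d_{GH}(B_\sigma(q),B^n_\sigma(0))\le C(n)(\epsilon/3)^{\alpha}\sigma$, and $C(n)(\epsilon/3)^{\alpha}\le\epsilon$ is not justified. The same bookkeeping must also confirm Anderson's applicability at scale $\tau$, i.e.\ $C_0c\le(\epsilon(n)-\epsilon)\tau$, which your choice does arrange; enlarging $K(n)$ preserves this. With these corrections the direct proof can be made to work, but in its current form the decisive step of part (1) fails, which is precisely the kind of quantifier trouble the paper's compactness argument is designed to avoid.
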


Here $d_{pGH}$ denote the pointed Gromov-Hausdorff metric on the space of isometry classes of pointed proper metric spaces \cite{Herron}, which induces the pointed Gromov-Hausdorff topology. The latter can also be defined using the $\epsilon$-Gromov-Hausdorff approximation \cite[Proposition 3.5]{Herron}.

\begin{proof}It suffices to prove the case \( r = 1 \). 

Assume that  the first statement does not hold. Then, we can find sequences \( (M_i, p_i) \) and \( (N_i, q_i) \) satisfying   
\[
d_{\text{pGH}}((M_i, p_i), (N_i, q_i)) \leq i^{-1},
\] and \( p_i \in \mathcal{R}_{\epsilon,1} \), but \( q_i \notin \mathcal{R}_{\epsilon, i^{-1}} \). 

Then, \( (M_i, p_i) \) and \( (N_i, q_i) \) converge to the same limit \( (X, p) \). Since $p_i\in \mathcal R_{\epsilon,1}$, we know that \( p \) is a regular point. Then we can find a sequence \( r_i \to \infty \) with  \( r_i / i \to 0 \), such that 
\[
(r_i N_i, q_i) \xrightarrow{\text{pGH}} \mathbb{R}^n.
\]
However, \( q_i \in r_i N_i \) is not in \( \mathcal{R}_{\epsilon, r_i/i} \). By the Reifenberg theorem \cite{CC97}, for sufficiently large \( i \), \( q_i \) must be in \( \mathcal{R}_{\epsilon, 1} \), which leads to a contradiction.

Similarly, we argue by contradiction to prove the second statement. Suppose there exists a sequence $(M_i,p_i)$ such that $p_i\in \mathcal{R}_{\epsilon,1}$ and $q_i\in B_{1/2}(p_i)$ such that $q_i\notin \mathcal{R}_{\epsilon,\frac{1}{i}}.$ Then passing to a subsequence, we can assume
\[
(M_i, p_i) \xrightarrow{\text{pGH}} (X,p).
\] Then $(X,p)\in \mathcal{M}(n,v)$ for some $v>0$ and $p\in \mathcal R_{\epsilon,1}$. By the choice of $\epsilon$, we know that there exists $\delta>0$ such that $B_{1/2}(p)\subset \mathcal{R}_{\epsilon,\delta}$, which implies a contradiction by the statement in (1).
\end{proof}

In the following we show that the effective regular set is effectively dense and almost geodesically convex.

\begin{lemma}\label{regular point dense}
Given $n\in \mathbb Z_+$ and $v>0$, there exists a constant \( \lambda(n, v) \in (0,1)\) such that if \( (X, p) \in \mathcal{M}(n, v) \), then \( \mathcal{R}_{\epsilon, \lambda} \cap B_1(p) \neq \emptyset \). In particular, for any $r\in (0,1]$,
$\mathcal{R}_{\epsilon,\lambda r} \cap B_r(p) \neq \emptyset$.

\end{lemma}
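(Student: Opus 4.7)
The proof is a standard compactness/contradiction argument that leverages the stability of the effective regular set proved in Lemma \ref{close regular points}. The plan is as follows.

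Suppose the statement fails. Then there exist $n$, $v$ and a sequence $(X_i, p_i) \in \mathcal{M}(n, v)$ with $\mathcal{R}_{\epsilon, 1/i} \cap B_1(p_i) = \emptyset$ for each $i$. By Gromov's precompactness theorem (noting that $|\mathrm{Ric}| \leq n-1$ gives a lower Ricci bound), after passing to a subsequence we obtain $(X_i, p_i) \xrightarrow{\mathrm{pGH}} (X_\infty, p_\infty)$ with $(X_\infty, p_\infty) \in \mathcal{M}(n, v)$.

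Next, by the Cheeger--Colding theory the regular set $\mathcal{R}$ of $X_\infty$ has full $n$-dimensional Hausdorff measure (and in particular is nonempty inside $B_1(p_\infty)$ by the volume non-collapsing hypothesis). Pick any $q \in B_1(p_\infty) \cap \mathcal{R}$. Since $\epsilon = \epsilon(n)/2$ and $\mathcal{R} = \bigcap_{\epsilon' > 0} \mathcal{R}_{\epsilon'}$, we have $q \in \mathcal{R}_{\epsilon}$, so there exists some $\delta_0 > 0$ (depending on the limit and on $q$) such that $q \in \mathcal{R}_{\epsilon, \delta_0}$. Choose $q_i \in X_i$ with $q_i \to q$; for large $i$, $q_i \in B_1(p_i)$ and $d_{\mathrm{pGH}}((X_\infty, q), (X_i, q_i)) \leq c(n)\delta_0$, where $c(n)$ is the constant from Lemma \ref{close regular points}. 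That lemma then yields $q_i \in \mathcal{R}_{\epsilon, c(n)\delta_0}$ for all large $i$, which contradicts $\mathcal{R}_{\epsilon, 1/i} \cap B_1(p_i) = \emptyset$ once $1/i < c(n)\delta_0$. This produces the required uniform $\lambda(n, v) > 0$.

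For the ``in particular'' statement, I would use the scale-invariance of $\mathcal{M}(n, v)$ noted at the start of the section: for $r \in (0, 1]$ the rescaled space $(X, r^{-1} d, p)$ still lies in $\mathcal{M}(n, v)$ since $r^{-1} \geq 1$, and an $(\epsilon, \lambda)$-effective regular point for $r^{-1} d$ becomes an $(\epsilon, \lambda r)$-effective regular point for $d$. Applying the already-proved nonemptiness of $\mathcal{R}_{\epsilon, \lambda} \cap B_1(p)$ at scale $r^{-1} d$ and unscaling gives a point in $\mathcal{R}_{\epsilon, \lambda r} \cap B_r(p)$. The only place requiring care is the stability step where one transports $(\epsilon, \delta_0)$-regularity from $X_\infty$ back to $X_i$; this is precisely what Lemma \ref{close regular points}(1) is designed to handle, so there is no serious obstacle.
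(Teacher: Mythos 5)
Your argument is correct and is essentially the paper's own proof: both run the same compactness/contradiction argument, using full measure of the regular set in the limit space and Lemma \ref{close regular points} to transfer effective regularity between the limit and the converging sequence (the paper phrases it as "the limit ball would contain no effective regular points," you phrase it as "pick a regular limit point and push it back," which is the same step). The rescaling argument for the "in particular" clause is also the intended one.
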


\begin{proof}
Assume that we can find $(X_i,q_i)$ satisfying the assumptions but with $\mathcal{R}_{\epsilon,i^{-1}} \cap B_1(q_i) = \emptyset$.
Passing to a sequence, we get a limit $(X,q)$ and by Lemma \ref{close regular points}, we would get that for any $r>0$, we have $B_{1/2}(q) \cap \mathcal{R}_{\epsilon,r} = \emptyset$, which contradicts with the fact that the regular point $\mathcal{R}$ has full $n$-dimensional Hausdorff measure.
\end{proof}


\begin{lemma}\label{regular point path connected}
Given $n\in \mathbb Z_+$, $v>0$ and $r\in (0, 1]$, there exists a constant $\mu(n,v,r)>0$ such that if $(X,p)\in \mathcal M(n,v)$, then for  any $q_1,q_2 \in B_{20s}(p) \cap \mathcal R_{\epsilon,rs}$ (for some $s\in (0, 1]$),there is a minimizing geodesic connecting $q_1$ and $q_2$ which is contained in $ \mathcal{R}_{\epsilon,\mu rs}$. 
\end{lemma}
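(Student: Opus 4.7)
The plan is a contradiction argument combining pointed Gromov--Hausdorff compactness with the smoothness of the regular set. By the scaling invariance of $\mathcal{M}(n,v)$ under rescaling by $\lambda\geq 1$---so that $\mathcal{R}_{\epsilon,rs}$ in $(X,d)$ becomes $\mathcal{R}_{\epsilon,r}$ in $(X,d/s)$---it suffices to prove the lemma for $s=1$. Suppose to the contrary that there exist $\mu_i\to 0$, $(X_i,p_i)\in\mathcal{M}(n,v)$, and $q_{1,i},q_{2,i}\in B_{20}(p_i)\cap\mathcal{R}_{\epsilon,r}$ such that no minimizing geodesic from $q_{1,i}$ to $q_{2,i}$ in $X_i$ is contained in $\mathcal{R}_{\epsilon,\mu_i r}$. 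Passing to a subsequence by Gromov compactness, $(X_i,p_i)\to (X_\infty,p_\infty)\in\mathcal{M}(n,v)$ in pGH and $q_{j,i}\to q_{j,\infty}$; by Lemma \ref{close regular points}(1), $q_{j,\infty}\in \mathcal{R}_{\epsilon,cr}$ with $c=c(n)$, whence Theorem \ref{epsilon regularity} yields $B_{cr/2}(q_{j,\infty})\subset \mathcal{R}$.

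The core step is to exhibit a minimizing geodesic $\gamma^*$ from $q_{1,\infty}$ to $q_{2,\infty}$ in $X_\infty$ entirely contained in $\mathcal{R}$. For non-collapsed Ricci limits with two-sided Ricci bounds, this (weak) geodesic convexity of the regular set should be available, drawing on the Cheeger--Naber codimension-$4$ estimate (Theorem \ref{CN}), the Cheeger--Colding $C^{1,\alpha}$-structure on $\mathcal{R}$, and a transversality argument exploiting that $\mathcal{R}$ carries full $n$-dimensional Hausdorff measure. Given $\gamma^*\subset\mathcal{R}$, its compactness produces some $\delta_0>0$ with $\gamma^*\subset\mathcal{R}_{\epsilon,\delta_0}$. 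The convergence $X_i\to X_\infty$ is $C^{1,\alpha}$ on a tubular neighborhood $U$ of $\gamma^*$ in $\mathcal{R}$, and since $\gamma^*$ is minimizing it has no interior conjugate points, so the inverse function theorem applied to $\exp_{q_{1,i}}$ produces, for large $i$, a smooth geodesic $\tilde\gamma_i$ in $X_i$ from $q_{1,i}$ to $q_{2,i}$ lying in the image of $U$ under the approximating $C^{1,\alpha}$-diffeomorphism. By Lemma \ref{close regular points}(1), each point of $\tilde\gamma_i$ belongs to $\mathcal{R}_{\epsilon,c\delta_0}$ for all large $i$, after possibly shrinking $\delta_0$.

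The final step is to upgrade $\tilde\gamma_i$ to a globally minimizing geodesic in $X_i$. Its length $|\tilde\gamma_i|$ tends to $L=d(q_{1,\infty},q_{2,\infty})=\lim_i d(q_{1,i},q_{2,i})$, so $\tilde\gamma_i$ is asymptotically minimizing. Any actually minimizing geodesic $\gamma_i^{\min}$ from $q_{1,i}$ to $q_{2,i}$ subsequentially converges in pGH to some minimizing geodesic of length $L$ in $X_\infty$; to arrange that such a limit coincides with $\gamma^*$---so that $\gamma_i^{\min}\subset\mathcal{R}_{\epsilon,c'\delta_0}$ for some $c'=c'(n)$ and large $i$, contradicting $\mu_i\to 0$---one invokes either uniqueness of minimizing geodesics between points free of conjugate points in the smooth limit structure, or a perturbation of the endpoints inside $\mathcal{R}_{\epsilon,cr}$ using the Hausdorff codimension $\geq 4$ of the singular set to rule out pathological branching. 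The main obstacle lies precisely here: both producing the geodesic $\gamma^*\subset\mathcal{R}$ in the limit and ensuring the lifted geodesic is globally (not merely locally) minimizing in $X_i$ rely on the fine regularity theory for non-collapsed Ricci limits with two-sided Ricci bounds.
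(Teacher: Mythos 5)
There is a genuine gap, located exactly where you flag it yourself. Your ``core step''---producing a minimizing geodesic $\gamma^*$ from $q_{1,\infty}$ to $q_{2,\infty}$ lying in $\mathcal{R}$---is not established by the tools you invoke: the codimension-$4$ bound on $\mathcal{S}$ together with full measure of $\mathcal{R}$ and a ``transversality'' perturbation cannot work, because a minimizing geodesic cannot be perturbed while remaining minimizing, and a codimension-$4$ set can certainly meet the (possibly very small) family of minimizers between two fixed points. The ingredient that actually makes this step work, and which the paper uses, is the Colding--Naber H\"older continuity of the geometry of small balls along the interior of limit minimizing geodesics \cite[Theorem 1.1]{CN12}: applied to the pointwise limit $\gamma$ of minimizing geodesics $\gamma_i\subset X_i$ from $q_{1,i}$ to $q_{2,i}$, and combined with Lemma \ref{close regular points} and the effective regularity of the endpoints, it gives a single $\delta>0$ with $\gamma\subset\mathcal{R}_{\epsilon,\delta}$. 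Without this (or an equivalent) input, your argument does not get off the ground.

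Your second and third steps are also structured in the wrong direction and compound the problem. The paper does not lift a limit geodesic back to $X_i$ via the exponential map and then try to show it is globally minimizing; it simply takes the genuinely minimizing $\gamma_i$ in $X_i$, notes that their limit $\gamma$ is minimizing and (by the Colding--Naber step above) lies in $\mathcal{R}_{\epsilon,\delta}$, and then applies Lemma \ref{close regular points}(1) to the points of $\gamma_i$, which are pointwise close to points of $\gamma$, to conclude $\gamma_i\subset\mathcal{R}_{\epsilon,c\delta}$ for large $i$---contradicting the assumption that no minimizer lies in $\mathcal{R}_{\epsilon,\mu_i r}$ once $c\delta>\mu_i r$. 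Your alternative route additionally needs the subsequential limit of $\gamma_i^{\min}$ to coincide with your $\gamma^*$, which fails in general since minimizing geodesics between fixed endpoints need not be unique in these limit spaces; the paper's argument avoids this entirely by proving regularity of the limit of the minimizers themselves rather than of some separately constructed geodesic.
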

\begin{proof}
By rescaling we may assume $s=1$.
 For a given \( r \in (0, 1] \), suppose there exists a sequence \( (X_i, p_i) \in \mathcal{M}(n, v) \) with \( q_{1,i}, q_{2,i} \in B_{20}(p_i) \cap \mathcal{R}_{\epsilon, r} \), but there is no minimizing geodesic from \( q_{1,i} \) to \( q_{2,i} \) in \( \mathcal{R}_{\epsilon, 1/i} \). Passing to a subsequence, we obtain that \( (X_i, p_i) \xrightarrow{\text{pGH}} (X, p) \), and let \( q_{1, \infty} \) and \( q_{2, \infty} \) denote the limits of the sequences \( q_{1,i} \) and \( q_{2,i} \), respectively. 

Then, we know that \( q_{1,\infty}, q_{2,\infty} \in \mathcal{R}_{\epsilon, r} \). Let $\gamma_i$ be a geodesic from $q_{1,i}$ to $q_{2,i}$. Passing to a subsequence if necessary, $\gamma_i$ point-wisely converges to a geodesic $\gamma$ from $p_{1,\infty}$ to $p_{2,\infty}$.  By Lemma \ref{close regular points} and the H\"older continuity estimate in \cite[Theorem 1.1]{CN12}, we know that there exists \( \delta > 0 \) such that $\gamma \subset \mathcal{R}_{\epsilon,\delta}$. Then by Lemma \ref{close regular points}, for $i$ large we know $\gamma_i \subset \mathcal{R}_{\epsilon,c\delta}$, a contradiction.

\end{proof}

For a metric cone $\mathcal C=C(Y)$ in $\mathcal M(n,v)$ for some $n\geq 3$ and $v>0$, we denote by $\mathrm{Vol}(\mathcal C)$  the ratio between the volume of unit ball centered at the vertex of the cone and the volume of the unit ball in $\mathbb R^n$. 

\begin{theorem}[\protect{\cite[Theorem 7.7]{Pan-Wei}}]\label{pi1 bounded of the link}
  We have
\[
\left| \pi_1(Y) \right| \leq \frac{1}{\mathrm{Vol}\left( C(Y) \right)}.
\]

\end{theorem}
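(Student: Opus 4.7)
\emph{Proof sketch.} The plan is a volume-doubling argument on the universal cover of $Y$, combined with Bishop--Gromov at the vertex. First, since $C(Y)$ is a non-collapsed Ricci limit space, its link $Y$ is semi-locally simply connected by the results cited in the introduction, so the universal cover $\tilde\pi : \tilde Y \to Y$ is well defined. One then forms the metric cone $C(\tilde Y)$ with lifted vertex $\tilde o$; away from the vertex the induced map $C(\tilde Y)\to C(Y)$ is a Riemannian covering with deck group $\pi_1(Y)$, and $\tilde o$ is the only branch point.

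The main technical step is to verify that $C(\tilde Y)$ itself belongs to $\mathcal{M}(n,v')$ for some $v' = v'(n,v) > 0$. I would approach this by equivariant Gromov--Hausdorff convergence in the spirit of Fukaya--Yamaguchi: realize $C(Y)$ as a blow-up limit of pointed manifolds $(M_i,p_i)\in\mathcal{M}(n,v)$, pass to the universal covers of balls of fixed radius about the $p_i$ (using semi-local simple connectedness), and extract a pointed equivariant limit. Because the Ricci bound is a local hypothesis and covers are local isometries, the limiting cone $C(\tilde Y)$ inherits the Ricci bound; the non-collapsing constant on the cover is exactly what encodes the order of the deck group quantitatively.

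Once $C(\tilde Y)\in\mathcal{M}(n,v')$, it is a metric cone with generalized $\mathrm{Ric}\geq 0$ (the Ricci bound scales away under dilations preserving the cone structure), so Bishop--Gromov at $\tilde o$ gives $\underline{\vol}(B_1(\tilde o))\leq 1$. A direct cone computation then yields
\[
\vol(B_1(\tilde o)) \;=\; \tfrac{1}{n}\vol(\tilde Y) \;=\; |\pi_1(Y)|\cdot\tfrac{1}{n}\vol(Y) \;=\; |\pi_1(Y)|\cdot\vol(B_1(o)),
\]
and dividing by $\omega_n$ produces $|\pi_1(Y)|\cdot\mathrm{Vol}(C(Y))\leq 1$, which is the claim. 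The argument also forces $\pi_1(Y)$ to be finite \emph{a posteriori}, since otherwise the left-hand side of the volume identity would be infinite.

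The principal obstacle is the middle step: confirming that the cone over the universal cover is itself a non-collapsed Ricci-bounded limit of smooth manifolds, so that Bishop--Gromov applies rigorously at $\tilde o$. This relies on both the semi-local simple connectedness of the approximating manifolds and a careful equivariant compactness argument; once those are in place, the remaining inequality is a purely volume-theoretic calculation.
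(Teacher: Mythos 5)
The paper itself contains no proof of this statement—it is quoted directly from Pan--Wei (their Theorem 7.7)—so your sketch has to stand on its own. Its overall shape (pass to the universal cover $\tilde Y$ of the link, cone it, count fundamental domains, and apply sharp Bishop--Gromov at the vertex of $C(\tilde Y)$) is the natural strategy, and the final volume identity and the a posteriori finiteness are fine. The problem is exactly the step you flag as the principal obstacle: the route you propose for it does not work. Taking universal covers of balls $B_R(p_i)$ in the approximating manifolds and passing to a Fukaya--Yamaguchi equivariant limit cannot produce $C(\tilde Y)$, because $\pi_1(Y)$ is in general invisible to the fundamental groups of the $M_i$ or of balls in them. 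Concretely, in the Eguchi--Hanson blow-down the manifolds are simply connected, the rescaled limit is $\mathbb{C}^2/\mathbb{Z}_2=C(\mathbb{RP}^3)$ with $\pi_1(Y)=\mathbb{Z}_2$, the universal covers of the balls are the balls themselves, and their equivariant limit is again $C(\mathbb{RP}^3)$ with trivial limit group---not $C(S^3)=\mathbb{R}^4$. The loops generating $\pi_1(Y)$ are non-contractible only in the punctured cone; upstairs they bound disks passing through the region that collapses to the vertex, so no cover of $B_R(p_i)$ unwraps them. Covers of annuli $A_{r,R}(p_i)$ do see $\pi_1(Y)$, but their limits are incomplete spaces to which Bishop--Gromov at the vertex does not directly apply, so that variant needs substantially more work than the sketch indicates.

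A correct way to supply the missing step stays entirely on the limit and is synthetic. Since $C(Y)\in\mathcal{M}(n,v)$ is a cone, its rescalings remain in $\mathcal{M}(n,v)$, so it is a non-collapsed limit of manifolds whose Ricci bounds scale to zero and hence a non-collapsed $\mathrm{RCD}(0,n)$ space with respect to $\mathcal{H}^n$. Ketterer's cone theorem then shows $Y$ is $\mathrm{RCD}(n-2,n-1)$ with diameter at most $\pi$; by Mondino--Wei the universal cover $\tilde Y$ exists and is again $\mathrm{RCD}(n-2,n-1)$, hence compact with $\mathrm{diam}(\tilde Y)\le\pi$ by generalized Bonnet--Myers (so $\pi_1(Y)$ is finite), and Ketterer again gives that $C(\tilde Y)$ is $\mathrm{RCD}(0,n)$, non-collapsed because $\mathcal{H}^{n-1}(\tilde Y)=|\pi_1(Y)|\,\mathcal{H}^{n-1}(Y)$. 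The sharp Bishop--Gromov inequality for non-collapsed $\mathrm{RCD}$ spaces (De Philippis--Gigli) then gives $\mathcal{H}^n(B_1(\tilde o))\le\omega_n$, and your fundamental-domain identity $\mathcal{H}^n(B_1(\tilde o))=|\pi_1(Y)|\,\omega_n\,\mathrm{Vol}(C(Y))$ closes the argument; alternatively one can simply follow Pan--Wei's own proof. Note also that your appeal to semi-local simple connectedness of $Y$ needs a word of justification ($Y$ is not itself a Ricci limit space); it follows either from the local contractibility of small loops in $C(Y)$ away from the vertex, or directly from the $\mathrm{RCD}$ theory of universal covers.
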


\section{Proof of the main theorem}\label{proof of the main theorem}
We first prove an
estimate about the group order of local $H_1$ and then we show that Theorem \ref{main} follows from this estimate together with the loop decomposition result in Lemma \ref{small loops}.  It is important here that we work with the effective regular set, which is preserved under taking Gromov–Hausdorff limits and enables us to carry out a contradiction argument.
\begin{theorem}\label{weak main}
Fix $n\in \mathbb Z_+$. Let  $\epsilon=\epsilon(n)/2$, where $\epsilon(n)$ is the dimensional constant given in Theorem \ref{epsilon regularity}.    Given \( v \in (0,1] \) and  \( \delta \in (0,1) \), there exist positive constants \( t(n,v) \), \( C(n,v,\delta) \), and \( \delta'(n,v,\delta) \) such that if \( (X, d, p) \in \mathcal{M}(n, v) \), then
\[
\left| \mathrm{Im}\left( H_1\left( B_t(p) \cap \mathcal{R}_{\epsilon, t\delta}, \mathbb{Z} \right) \rightarrow H_1\left( B_1(p) \cap \mathcal{R}_{\epsilon, \delta'}, \mathbb{Z} \right) \right) \right| \leq C.
\]
\end{theorem}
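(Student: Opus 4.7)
The plan is to argue by induction on the volume ratio lower bound $v \in (0, 1]$, decrementing by a dimensional constant $\eta = \eta(n) > 0$ at each step. The base case $v \geq 1 - \eta$ follows directly from Theorem \ref{epsilon regularity}: by Anderson's $\epsilon$-regularity combined with Cheeger--Colding's Reifenberg-type result, $B_1(p) \subset \mathcal{R}_{\epsilon, 1}$ and is bi-H\"older equivalent to a Euclidean ball, so $H_1(B_1(p) \cap \mathcal{R}_{\epsilon, \delta'}, \mathbb{Z}) = 0$ and the conclusion holds trivially with $C = 1$.

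For the inductive step, assume the theorem at every volume lower bound $v' \geq v + \eta$ (and every input $\delta''$), and argue by contradiction at $v$. If the conclusion fails for some fixed $\delta > 0$, there exist $(X_i, p_i) \in \mathcal{M}(n, v)$, $t_i \to 0$, $\delta'_i \to 0$, and loops $\gamma_i \subset B_{t_i}(p_i) \cap \mathcal{R}_{\epsilon, t_i\delta}$ whose classes in $H_1(B_1(p_i) \cap \mathcal{R}_{\epsilon, \delta'_i}, \mathbb{Z})$ generate subgroups of unbounded order. Extract a pointed Gromov--Hausdorff limit $(X_\infty, p_\infty) \in \mathcal{M}(n, v)$ and examine the volume ratio function $r \mapsto \underline{\vol}(B_r(p_\infty))$, which is approximately non-increasing on $(0, 1]$ by Bishop--Gromov. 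Split into two cases.

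\emph{Case A:} $\underline{\vol}(B_{r_0}(p_\infty)) > v + 2\eta$ for some $r_0 \in (0, 1/2]$. Then for $i$ large, $\underline{\vol}(B_{r_0}(p_i)) > v + \eta$; combined with the Ricci upper bound and near-monotonicity in $r$, rescaling the ball $B_{r_0}(p_i)$ to unit scale (with parameter adjustments) places it in the hypothesis class $\mathcal{M}(n, v + \eta)$. The inductive hypothesis then bounds the order of the image of the loop's class in $H_1(B_{r_0}(p_i) \cap \mathcal{R}_{\epsilon, \delta''}, \mathbb{Z})$ for some $\delta'' = \delta''(n, v+\eta, \delta) > 0$; since $t_i \to 0$, for $i$ large $\gamma_i$ lies in the domain of this bound, and the bound is inherited by the image in $H_1(B_1(p_i) \cap \mathcal{R}_{\epsilon, \delta'_i}, \mathbb{Z})$ by functoriality, contradicting unboundedness. \emph{Case B:} $\underline{\vol}(B_r(p_\infty)) \leq v + 2\eta$ for all $r \in (0, 1/2]$. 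Then the volume ratio is nearly constant across scales at $p_\infty$; by Cheeger--Colding's almost-volume-cone implies almost-metric-cone theorem, suitably rescaled balls near $p_\infty$ converge to a metric cone $\mathbb{R}^{n-k} \times C(Y)$ with $k \geq 4$ (Theorem \ref{CN}) and $\vol(C(Y)) \in [v, v + 2\eta]$. By Theorem \ref{pi1 bounded of the link}, $|\pi_1(Y)| \leq 1/v$. The regular set of $\mathbb{R}^{n-k} \times C(Y)$ with the apex removed deformation retracts, via radial dilation, onto $\mathbb{R}^{n-k} \times \{1\} \times Y^{reg}$; since $Y^{sing}$ has codimension $\geq 2$ in $Y$ (Theorem \ref{CN}), the map $\pi_1(Y^{reg}) \twoheadrightarrow \pi_1(Y)$ is surjective, and one can extract a bound on $H_1$ of the effective regular part of a ball in the cone in terms of $|\pi_1(Y)| \leq 1/v$. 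Transferring this bound to the approximating $X_i$ via Lemma \ref{close regular points} (stability of $\mathcal{R}_{\epsilon, \cdot}$ under GH convergence) and the $C^{1,\alpha}$ smooth convergence on regular loci from Theorem \ref{epsilon regularity} produces the contradiction with unboundedness.

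The main obstacle is Case B: carefully transferring the topological bound from the limit cone to the approximating sequence. This requires choosing the retraction domain to lie within $\mathcal{R}_{\epsilon, \delta'}$ for the appropriate $\delta'$, promoting Gromov--Hausdorff convergence to $C^{1,\alpha}$ Riemannian convergence on the regular loci so that a loop in $X_i$'s effective regular set and its limit in the cone's regular set can be identified modulo a homotopy inside the effective regular set, and invoking the $\pi_1(Y^{reg}) \twoheadrightarrow \pi_1(Y)$ surjection to bound the relevant image in $H_1$.
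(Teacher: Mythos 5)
Your overall strategy (induction on the volume lower bound, a cone/non-cone dichotomy, and Theorem \ref{pi1 bounded of the link} for the cross-section) is in the spirit of the paper, but the induction scheme with a \emph{fixed dimensional} decrement $\eta(n)$ does not work, and this is not a cosmetic issue. There is no uniform gap theorem for volume densities at level $v$: in your Case B the blow-up region is only pinched between $v$ and $v+2\eta$, and the points of the cross-section $Y$ (at unit distance from the vertex) have density strictly larger than $\vol(C(Y))\geq v$ but by an amount depending on the cone, not by $\eta(n)$; so the inductive hypothesis at level $v+\eta$ is simply not available where you need it. It is needed, because $\lvert\pi_1(Y)\rvert\leq 1/v$ only controls the image of a loop under the radial projection to $Y$: loops encircling the codimension-$\geq 4$ singular rays project to contractible loops in $Y$, and bounding their order in $H_1$ of the effective regular set is exactly the hard part (in the paper this kernel is handled by the loop decomposition Lemma \ref{small loops} plus the inductive bound \eqref{case2 bound on t balls} applied at points of $Y$, where one has a gain $\sigma>0$ that is \emph{specific to the limit cone}). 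The surjectivity $\pi_1(Y^{reg})\to\pi_1(Y)$ you invoke does not address this kernel at all. The paper's device for living with a non-uniform gain is the continuity argument on $v_0=\inf I$: one only needs the statement at $v_0+\sigma$ for \emph{some} $\sigma>0$ produced by the specific limit of the contradiction sequence, which holds automatically since everything above $v_0$ is in $I$. Relatedly, in Case B the pinching within a fixed $2\eta$ only gives balls $\Psi(\eta\mid n)$-GH-close to cones, with $\Psi(\eta)$ not small; transferring $H_1$ bounds on effective regular sets across a merely small-but-fixed GH distance is not justified, whereas the paper works with an exact limit of the blown-up sequence (blown up at the rate $1/t_i$ at which the loops shrink, not a rescaling of the unit-scale limit $X_\infty$), so the projection $\pi$, the vertex set and $Y$ are exact, and the transfer back to $X_i$ is done through Lemma \ref{contradiciton sequence}.

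There are two further missing ingredients. First, a scale mismatch in your Case A: the hypothesis of the statement at level $v+\eta$ requires the loop to lie in $\mathcal{R}_{\epsilon,t'\delta}$ with $t'$ the radius of the ball it sits in, but your $\gamma_i$ is only known to be effectively regular at scale $t_i\delta\ll t'r_0\delta$, and $\mathcal{R}_{\epsilon,t_i\delta}\not\subset\mathcal{R}_{\epsilon,t'r_0\delta}$ (smaller effective scale is the weaker condition); applying the induction with $\delta_i\to 0$ instead destroys the uniformity of $C(\cdot,\cdot,\delta)$. This is precisely why the paper proves the theorem first for one fixed $\delta_0=\lambda\mu/16$ (with the decomposition Lemma \ref{small loops} and Remark \ref{rescaling of decomposition} arranging that, after rescaling, the small loops are effectively regular at the fixed relative scale $\delta_0$), and only then extends to all $\delta$ by a second application of the decomposition. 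Second, Lemma \ref{small loops} requires the loop to be null-homologous in a macroscopic ball, which is supplied by the effective Anderson-type finiteness (Lemmas \ref{manifolds loop contractible} and \ref{contractible in big space}); your proposal never reduces to null-homologous loops nor uses any decomposition into small loops, and without these the inductive bounds on small balls cannot be assembled into a bound for $\gamma_i$ itself. Also, a minor point in Case A: a gain $\underline{\vol}(B_{r_0}(p_\infty))>v+2\eta$ at the center does not put the rescaled ball into $\mathcal{M}(n,v+\eta)$, since membership requires the ratio bound at \emph{all} points of the ball and all scales; the paper's Case 1 obtains the gain at all points of $B_{L+1}(p)$ and all $r\in(0,1]$ from the non-cone assumption by a compactness argument on the limit.
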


To prove this, we make a few preparations.
First, we establish the following effective version of Anderson’s result \cite{And90}, which guarantees that a loop in a given geodesic ball is contractible, up to a finite index, within a larger geodesic ball of definite size.

\begin{lemma}\label{manifolds loop contractible}
Given $v > 0$ and $n\in \mathbb Z_{+}$, there exist constants $t_0(n,v)>0,L_0(n,v)>0$ depending only on $n$ and $v$ such that if a $n$-manifold satisfies $(M,p)$, $\mathrm{Ric} \ge -(n-1)$ and \begin{equation}
         \begin{aligned}
         \underline{\vol}(B_r(q))&:=\frac{\vol(B_r(q))}{\omega_nr^n} \geq v, \text{ for any $r\in (0,1]$ and $q\in B_1(p)$.}
    \end{aligned}
    \end{equation}
    Then 
        \begin{equation}
		\left|\mathrm{Im}(\pi_1(B_{2t_0}(p),p)\rightarrow \pi_1(B_{0.8}(p),p))\right|\leq  L_0.
	\end{equation}
\end{lemma}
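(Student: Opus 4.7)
The strategy is to argue by contradiction via rescaling and compactness, reducing to the finiteness of $\pi_1$ for a Gromov--Hausdorff limit. Suppose the conclusion fails. Then, choosing sequences $t_0^{(i)} \to 0$ and $L_0^{(i)} \to \infty$, we obtain a sequence $(M_i, p_i)$ of $n$-manifolds satisfying the hypotheses, together with loops $\gamma_i^{(1)}, \ldots, \gamma_i^{(k_i)} \subset B_{2 t_0^{(i)}}(p_i)$ based at $p_i$ that represent distinct elements of $\mathrm{Im}\bigl(\pi_1(B_{2t_0^{(i)}}(p_i), p_i) \to \pi_1(B_{0.8}(p_i), p_i)\bigr)$, with $k_i \to \infty$.

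Rescale each metric by the factor $(t_0^{(i)})^{-1}$. Under this rescaling the Ricci lower bound becomes $-(n-1)(t_0^{(i)})^2 \to 0$, the volume non-collapsing with ratio $v$ now holds on balls of radius up to $(t_0^{(i)})^{-1} \to \infty$, the loops $\gamma_i^{(j)}$ lie in the fixed-size ball $B_2(p_i)$, and the ambient ball $B_{0.8}(p_i)$ enlarges to $B_{0.8/t_0^{(i)}}(p_i)$, exhausting $M_i$. By Gromov's precompactness, a subsequence converges in the pointed Gromov--Hausdorff topology to a non-collapsed Ricci limit space $(X, p_\infty)$ with $\mathrm{Ric} \geq 0$ and Euclidean volume growth of rate at least $v$. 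By the Ricci-limit extension of Anderson's finiteness theorem \cite{And90} --- which in this setting follows from the existence of the universal cover of a non-collapsed Ricci limit space \cite{Pan-Wei, Wang2022, Wang2021} together with Bishop--Gromov volume comparison on the cover --- the group $\pi_1(X, p_\infty)$ is finite, of order at most some constant $N = N(n, v)$.

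The main technical obstacle is to show that the $k_i$ distinct classes $[\gamma_i^{(j)}]$ in $\pi_1(B_{0.8/t_0^{(i)}}(p_i))$ descend, in the limit, to $k_i$ distinct classes in $\pi_1(X, p_\infty)$; this would force $N \geq k_i \to \infty$ and yield the desired contradiction. The required injectivity is an instance of the stability of local fundamental groups under non-collapsed Ricci limit convergence: were two of the loops to become homotopic in $X$, an equivariant Gromov--Hausdorff argument applied to the local universal covers of $B_{0.8/t_0^{(i)}}(p_i)$ --- again leveraging the semi-local simple connectedness of the Ricci limits --- would allow us to pull the homotopy back and conclude that the two loops are already homotopic in $B_{0.8/t_0^{(i)}}(p_i)$ for sufficiently large $i$, contradicting the distinctness of the classes.
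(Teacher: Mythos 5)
Your reduction to the limit space stalls exactly at the step you flag as the ``main technical obstacle,'' and that obstacle is not something the cited results resolve. What you need is that $k_i\to\infty$ distinct classes in $\mathrm{Im}\bigl(\pi_1(B_{2t_0^{(i)}}(p_i))\to\pi_1(B_{0.8}(p_i))\bigr)$ remain distinct after passing to $\pi_1(X,p_\infty)$, i.e.\ an \emph{injectivity} (lower semicontinuity) statement for local fundamental groups under pointed Gromov--Hausdorff convergence with only $\mathrm{Ric}\ge -(n-1)$ and non-collapsing. The semi-local simple connectedness results of Pan--Wei and Wang give maps in the opposite direction: they produce a homomorphism from the image group of the manifolds to the image group of the limit (this is exactly how the paper uses \cite[Lemma 4.2]{Wang2021} in its Lemma on passing to the sequence), with control on the image, but no injectivity. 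Concretely, distinct deck transformations of the cover of $B_{0.8/t_0^{(i)}}(p_i)$ whose displacement at the lifted basepoint tends to $0$ can converge to the identity in any equivariant limit, so the group order can drop in the limit; nothing in the hypotheses rules out arbitrarily short loops that are nontrivial in the large ball. Moreover, your proposed repair --- ``pull the homotopy back'' from $X$ to $M_i$ --- requires a uniform contractibility function for the manifolds $M_i$ (small loops in $M_i$ contractible inside balls of uniformly controlled size), which is not available under a lower Ricci bound and non-collapsing alone; note that the paper itself only transfers homotopies/homologies from the limit to the sequence after invoking Reifenberg-type regularity in the two-sided Ricci bound setting, which you do not have here. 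Assuming such a local $\pi_1$-stability statement is also close to circular: it is stronger than what the whole paper manages to prove (the authors explain they only obtain exponent bounds on local $H_1$, not order bounds on local $\pi_1$).

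The paper avoids this issue entirely by never passing to a limit of the groups. After rescaling so that $G_i=\mathrm{Im}(\pi_1(B_1(p_i))\to\pi_1(B_{0.8i}(p_i)))$ has order $>i$, it works directly in the universal cover $\widetilde U_i$ of $U_i=B_{0.8i}(p_i)$: $G_i$ is generated by elements displacing $\tilde p_i$ by at most $3$, so either all of $G_i$ or at least $i/18$ of its elements displace $\tilde p_i$ by at most $i/6$; these elements translate a fundamental domain $S_i$ (of volume $\ge v\,\omega_n i^n$ by non-collapsing) to disjoint copies inside $B_{2i/3}(\tilde p_i)$, whose volume is at most $c_1(n)\,\omega_n i^n$ by Bishop--Gromov. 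The key point is that counting disjoint translates of a fundamental domain is insensitive to how small the displacements are, which is precisely the information lost when you replace $G_i$ by a limit group or by $\pi_1(X,p_\infty)$. As written, your argument therefore has a genuine gap; to fix it you would either have to prove the missing injectivity statement (not known in this generality) or switch to a quantitative covering-space/volume-counting argument of the above type.
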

\begin{proof}
Assume that the lemma does not hold, then there is a sequence $(M_i,d_i,p_i)$ such that $\mathrm{Ric} \ge -(n-1)$, \begin{equation}
         \begin{aligned}
         \underline{\vol}(B_r(q))&:=\frac{\vol(B_r(q))}{\omega_nr^n} \geq v, \text{ for any $r\in (0,1]$ and $q\in B_1(p_i)$.}
    \end{aligned}
    \end{equation}
and $G_i=\mathrm{Im}(\pi_1(B_{\frac{1}{i}}(p_i),p_i)\rightarrow \pi_1(B_{0.8}(p_i),p_i))$ satisfies $\left|G_i\right| >  i$.

Scaling up the metric \( d_i \) by a factor of \( i \), all geometric quantities in the following will be measured with respect to the rescaled metric. Define \( U_i = B_{0.8i}(p_i) \), and let \( (\widetilde{U}_i, \tilde{d}_i, \tilde{p}_i) \) denote the universal cover of \( U_i \) with the lifted metric \( \tilde{d}_i \) and base point \( \tilde{p}_i \).
There exists a constant \( c_1(n) > 0 \), depending only on the dimension \( n \), such that
\[
\mathrm{Vol}\left( B_{\frac{2i}{3}}(\tilde{p}_i) \right) \leq c_1(n)\, \omega_n i^n.
\]
We can construct a canonical fundamental domain of \( U_i \) in \( \widetilde{U}_i \), centered at \( \tilde{p}_i \). Let
\[
S_i = U_i \cap B_{i/2}(\tilde{p}_i),
\]
which satisfies
\[
\mathrm{Vol}(S_i) = \mathrm{Vol}\left( B_{i/2}(p_i) \right).
\]
Since \( (X_i, d_i, p_i) \in \mathcal{M}(n, v) \), by definition we have the volume lower bound:
\begin{equation}\label{eq--volume upper bound}
\mathrm{Vol}\left( B_{i/2}(p_i) \right) \geq v\, \omega_n i^n.
\end{equation}
Thus,
\[
\mathrm{Vol}(S_i) \geq v\, \omega_n i^n.
\]

Since $\pi_1(B_1(p_i),p_i)$ is generated by loops with length less than $3$, $G_i$ is generated by elements in 
$$G_i(\tilde{p}_i,3):=\{ g \in G_i\mid \tilde d_i(\tilde{p},g\tilde{p}) \le 3\}.$$

Fix a constant $C> \frac{c_1(n)}{v}$ and choose $i>20 C$. We claim that $\left|G_i(\tilde{p}_i,i/6)\right| > C$.
 If $G_i(\tilde{p}_i,i/6) = G_i$, then  $\left|G_i(\tilde{p}_i,i/6)\right| > i > 20C$ by the assumption $|G_i|\geq i$ and the choice of $i$. If $G_i(\tilde{p}_i,i/6) \neq G_i$, then this means that $G_i$ has an element moving $\tilde{p}_i$ out of $B_{i/6}(\tilde{p})$.
 Since any element in $G_i(\tilde{p}_i,3)$ can move $\tilde{p}$ at most by distance $3$ and $G_i$ is generated by elements in $G_i(\tilde{p}_i,3)$, the existence of an element in $G_i$ moving $\tilde{p}_i$ out of $B_{i/6}(\tilde{p})$ implies that there exists at least $\frac{i}{18}$ elements $g_1,\cdots, g_\frac{i}{18}$ in $G_i(\tilde{p}_i,3)$ such that for any $1\leq k\leq \frac{i}{18}$, the product
 \begin{equation}
     g_1\cdots g_k
 \end{equation}defining different elements in $G_i$
 So we have $\left|G_i(\tilde{p}_i,i/6)\right| \ge i/18 > C$. Therefore we always have $\left|G_i(\tilde{p}_i,i/6)\right| > C$.

 Then , we obtain the volume lower bound of the $G_i(\tilde{p}_i,i/6)$-orbit of $S_i$, 
 \begin{equation}
\vol\left(G_i(\tilde{p}_i,i/6)(S_i) \right)\geq C  v\omega_ni^n.
 \end{equation} On the other hand, $G_i(\tilde{p}_i,i/6)(S_i) \subset B_{2i/3}(\tilde{p}_i)$ and therefore by \eqref{eq--volume upper bound}, we have 
 \begin{equation}
     \vol\left(G_i(\tilde{p}_i,i/6)(S_i) \right)\leq c_1(n)\omega_n i^n,
 \end{equation}which contradicts to the choice of $C$.
\end{proof}

\begin{lemma}\label{contractible in big space}
		Given $v\in (0,1]$ and $n\in \mathbb Z_{+}$, take $t_0(n,v)>0,L_0(n,v)>0$ in Lemma \ref{manifolds loop contractible}. Then for any $(X,d,p)\in \mathcal{M}(n,v)$, we have
        \begin{equation}
		\left|\mathrm{Im}(\pi_1(B_{t_0}(p),p)\rightarrow \pi_1(B_{0.9}(p),p))\right|\leq  L_0.
	\end{equation}

\end{lemma}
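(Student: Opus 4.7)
The plan is to deduce the limit statement from Lemma \ref{manifolds loop contractible} by Gromov--Hausdorff approximation, using the slack $2t_0>t_0$ and $0.9>0.8$ to absorb the GH error. Write $(X,d,p)$ as a pGH limit of manifolds $(M_i,g_i,p_i)\in\mathcal M(n,v)$. For each $i$, Lemma \ref{manifolds loop contractible} gives
\begin{equation*}
\bigl|\mathrm{Im}\bigl(\pi_1(B_{2t_0}(p_i),p_i)\to \pi_1(B_{0.8}(p_i),p_i)\bigr)\bigr|\leq L_0,
\end{equation*}
so it is enough to transport this bound to the limit, losing only the slack in the radii.

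Arguing by contradiction, suppose the image in the limit has at least $L_0+1$ elements, represented by based loops $\gamma_0,\ldots,\gamma_{L_0}$ in $B_{t_0}(p)$. For each $i$ large, I would construct piecewise geodesic based loops $\gamma_k^{(i)}$ in $M_i$ from $p_i$ that approximate $\gamma_k$ under the GH map, and which are contained in $B_{t_0+o(1)}(p_i)\subset B_{2t_0}(p_i)$. By Lemma \ref{manifolds loop contractible} and pigeonhole, for each $i$ there exist $j\neq k$ with $[\gamma_j^{(i)}]=[\gamma_k^{(i)}]$ in $\pi_1(B_{0.8}(p_i),p_i)$; after extracting a subsequence, the indices $j,k$ are fixed, and we have based homotopies $H_i:[0,1]^2\to B_{0.8}(p_i)$ between $\gamma_j^{(i)}$ and $\gamma_k^{(i)}$. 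The goal is to produce a based homotopy in $B_{0.9}(p)$ between $\gamma_j$ and $\gamma_k$, giving the contradiction.

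The main obstacle is that the $H_i$ themselves have no obvious subsequential limit with controlled target. To get around this, I would exploit that Ricci limit spaces are semi-locally simply connected (Pan--Wei \cite{Pan-Wei}, Wang \cite{Wang2021,Wang2022}): cover $\overline{B_{0.85}(p)}$ by finitely many open balls $\{U_\alpha\}$ of radius so small that every loop inside $U_\alpha$ is null-homotopic in $B_{0.9}(p)$. Choose a triangulation of $[0,1]^2$ fine enough that, for all large $i$, $H_i$ sends each 2-simplex into an $M_i$-ball corresponding under the GH map to some $U_\alpha$. I would then assemble a map $\tilde H:[0,1]^2\to B_{0.9}(p)$ by sending each vertex to the GH-image of the $H_i$-value there, extending along edges by short minimizing geodesics in $X$, and filling each 2-simplex using the null-homotopy provided by semi-local simple connectedness. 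The boundary of $\tilde H$ agrees with $\gamma_j$ and $\gamma_k$ up to short loops which are again null-homotopic in $B_{0.9}(p)$, concluding $[\gamma_j]=[\gamma_k]$ in $\pi_1(B_{0.9}(p),p)$, a contradiction. The hard part is making this simplex-by-simplex patching precise: ensuring the assembled map is continuous, that every filled simplex stays inside $B_{0.9}(p)$, and that the endpoint corrections are genuinely null-homotopic in the target ball; the $0.9-0.8=0.1$ margin is exactly what allows these perturbations to be absorbed.
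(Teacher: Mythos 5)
Your proposal is correct and follows essentially the same route as the paper: approximate $(X,d,p)$ by the smooth spaces, apply Lemma \ref{manifolds loop contractible} there, and transport the bound to the limit using the slack $2t_0>t_0$, $0.9>0.8$; the paper simply packages your hand-built homotopy-transfer step as a citation to \cite[Lemma 4.2]{Wang2021}, which provides exactly the natural homomorphism between the two images of fundamental groups. The only points to tighten are the order of quantifiers (the triangulation of $[0,1]^2$ must be chosen after fixing one sufficiently large $i$, since the moduli of continuity of the $H_i$ are not uniform in $i$) and that you need the localized form of semi-local simple connectedness (small loops centered in $\overline{B_{0.85}(p)}$ contract inside $B_{0.9}(p)$, not merely in $X$), which is what \cite{Pan-Wei,Wang2021} actually supply.
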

\begin{proof}
Since $(X,d,p)\in \mathcal{M}(n,v)$, we can take a sequence of $(M_i,p_i) \xrightarrow{GH}(X,p)$ so that $\mathrm{Ric} \ge -(n-1)$ and \begin{equation}
         \begin{aligned}
         \underline{\vol}(B_r(q))&:=\frac{\vol(B_r(q))}{\omega_nr^n} \geq v, \text{ for any $r\in (0,1]$ and $q\in B_1(p)$.}
    \end{aligned}
    \end{equation}
By Lemma \ref{manifolds loop contractible}, we have for any $i$,
\begin{equation}
		\left|\mathrm{Im}(\pi_1(B_{2t_0}(p_i),p_i)\rightarrow \pi_1(B_{0.8}(p_i),p_i))\right|\leq  L_0.
	\end{equation}
By \cite[Lemma 4.2]{Wang2021} (see also the proof of Lemma \ref{contradiciton sequence}),  we know that for sufficiently large $i$, there is a natural homomorphism from 
$$\mathrm{Im}(\pi_1(B_{2t_0}(p_i),p_i)\rightarrow \pi_1(B_{0.8}(p_i),p_i))$$
to 
$$\mathrm{Im}(\pi_1(B_{3t_0}(p),p)\rightarrow \pi_1(B_{0.9}(p),p)).$$ Moreover, the image of this homomorphism contains
 $$\mathrm{Im}(\pi_1(B_{t_0}(p),p)\rightarrow \pi_1(B_{0.9}(p),p)).$$ Then we have 
 \begin{equation}
		\left|\mathrm{Im}(\pi_1(B_{t_0}(p),p)\rightarrow \pi_1(B_{0.9}(p),p))\right|\leq  L_0.
	\end{equation}
\end{proof}
\begin{remark}
	In the above lemma, it is sufficient to assume a Ricci lower bound. Since the first homology group is the abelianization of the fundamental group, we also obtain the following result: 
 \begin{equation}
		\left|\mathrm{Im}(H_1(B_{t_0}(p),\mathbb{Z})\rightarrow H_1(B_{0.9}(p),\mathbb{Z}))\right|\leq L_0.
 \end{equation}
\end{remark}

In our proof of Theorem \ref{weak main} we will argue by contradiction. The following Lemma allows us to pass from the limit space back to the converging sequence. 

\begin{lemma}\label{contradiciton sequence}
     Given $v>0$, $n\in \mathbb Z_+$ and a sequence $(X_i,p_i) \in \mathcal{M}(n,v)$ with $$(X_i,p_i) \xrightarrow{pGH}(X,p).$$ Let $c=c(n)$ be the constant appearing in Lemma \ref{close regular points} and $t_0=t_0(n,v)$ be the constant in Lemma \ref{contractible in big space}. Then for any $t\leq t_0$ and $\delta,\delta'\in (0,1)$ with $\delta'\leq \delta$ and sufficiently large $i$, there exists a natural homomorphism $h$ from the group 
\begin{equation*}
  \mathrm{Im}\left(  H_1(B_{1.1t}(p) \cap \mathcal{ R}_{\epsilon,\delta},\mathbb{Z}) \rightarrow H_1(B_{0.9}(p)\cap \mathcal {R}_{\epsilon,\delta'},\mathbb{Z})\right)
\end{equation*} to the group
\begin{equation*}
    \mathrm{Im}\left(H_1(B_{1.2 t}(p_i) \cap \mathcal{ R}_{\epsilon,c\delta},\mathbb{Z}) \rightarrow H_1(B_{1}(p_i)\cap \mathcal {R}_{\epsilon,c^2\delta'},\mathbb{Z})\right).
\end{equation*} Moreover, the image of $h$ contains the group
\begin{equation*}
    \mathrm{Im}\left(H_1(B_{t}(p_i) \cap \mathcal{ R}_{\epsilon,\delta/c},\mathbb{Z}) \rightarrow H_1(B_{1}(p_i)\cap \mathcal {R}_{\epsilon,c^2\delta'},\mathbb{Z})\right).
\end{equation*}

\end{lemma}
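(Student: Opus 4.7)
The plan is to build $h$ by approximating cycles via Gromov--Hausdorff maps and simplicial replacement into the effective regular set of $X_i$, using the stability of effective regular points from Lemma \ref{close regular points}.

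First, fix $\epsilon_i$-Gromov--Hausdorff approximations $\phi_i: X \to X_i$ and $\psi_i: X_i \to X$ with $\epsilon_i \to 0$. Given a singular $1$-cycle $\gamma$ supported in $B_{1.1t}(p) \cap \mathcal{R}_{\epsilon,\delta}$, by compactness of its image I subdivide $\gamma$ into arcs with vertices $q_0, q_1, \ldots, q_N = q_0$ lying in $B_{1.1t}(p) \cap \mathcal{R}_{\epsilon,\delta}$ and with consecutive distance less than $\eta$, for some $\eta > 0$ smaller than $\min(c\delta, c^2 \delta')/100$. For $i$ so large that $\epsilon_i < c\eta/10$, Lemma \ref{close regular points}(1) gives $\phi_i(q_k) \in \mathcal{R}_{\epsilon, c\delta}$ and $\phi_i(q_k) \in B_{1.2t}(p_i)$. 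I replace each arc by a minimizing geodesic in $X_i$ joining $\phi_i(q_k)$ to $\phi_i(q_{k+1})$; since this segment has length less than $2\eta$, Theorem \ref{epsilon regularity} combined with Lemma \ref{close regular points}(2) implies it lies in $\mathcal{R}_{\epsilon, c\delta}$. Concatenating yields a piecewise-geodesic $1$-cycle $\gamma^{(i)}$ inside $B_{1.2t}(p_i) \cap \mathcal{R}_{\epsilon, c\delta}$, and I set $h([\gamma]) := [\gamma^{(i)}]$.

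To see $h$ is well defined on homology, suppose $\gamma$ cobounds a singular $2$-chain $\sigma$ in $B_{0.9}(p) \cap \mathcal{R}_{\epsilon, \delta'}$. I triangulate $\sigma$ into simplices of diameter less than $\eta$ with vertices in the effective regular set, apply $\phi_i$ to each vertex, and fill each image triangle by a smooth $2$-disk inside the Reifenberg-type neighborhood provided by Theorem \ref{epsilon regularity}. A further use of Lemma \ref{close regular points}(2) confirms these disks remain in $\mathcal{R}_{\epsilon, c^2\delta'}$, and the assembled $2$-chain cobounds $\gamma^{(i)}$ inside $B_1(p_i) \cap \mathcal{R}_{\epsilon, c^2 \delta'}$. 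Independence from the choice of subdivision and geodesic connectors follows by interpolating two discretizations and running the same filling argument on the thin strip between them. This is exactly the homological variant of the argument of \cite[Lemma 4.2]{Wang2021}.

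For the image statement, given a cycle $\eta$ in $B_t(p_i) \cap \mathcal{R}_{\epsilon, \delta/c}$, I run the symmetric construction using $\psi_i$: subdivide $\eta$ finely, replace its vertices by their $\psi_i$-images which by Lemma \ref{close regular points}(1) land in $B_{1.1t}(p) \cap \mathcal{R}_{\epsilon, \delta}$, and connect consecutive images by short minimizing geodesics in $X$, which by Lemma \ref{close regular points}(2) stay in $\mathcal{R}_{\epsilon, \delta}$. This produces a cycle $\gamma$ in the source of $h$; then $\phi_i \circ \psi_i$ differs from the identity on $X_i$ by at most $\epsilon_i$, so a short piecewise-geodesic $2$-chain filling the thin annulus between $h([\gamma])$ and $[\eta]$ in $\mathcal{R}_{\epsilon, c^2 \delta'}$ shows $h([\gamma]) = [\eta]$ in the target. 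The main obstacle is the careful bookkeeping of the various $\epsilon$-$\delta$ parameters through the construction and verifying that the Reifenberg-type fillings of $2$-chains fit inside the prescribed effective regular set with the claimed constants; beyond this it is a standard nerve/simplicial approximation argument adapted to the GH setting.
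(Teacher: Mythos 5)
Your construction is essentially the paper's: transfer a fine discretization of the cycle through the Gromov--Hausdorff approximation, use Lemma \ref{close regular points} to keep it in the effective regular set, fill the small transferred triangles by the Reifenberg theorem, and run the symmetric construction for the statement about the image of $h$. (The paper handles well-definedness by reducing to a contractible loop and quoting \cite[Lemma 2.4]{Pan-Wei}; your direct triangulation of the bounding $2$-chain is the same idea carried out by hand.)

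There is, however, a factor-of-$c$ slip at exactly the step you flag as delicate. For the connecting geodesics you invoke Lemma \ref{close regular points}(2) from the transferred vertices: since $\phi_i(q_k)\in\mathcal{R}_{\epsilon,c\delta}$, part (2) only places nearby points in $\mathcal{R}_{\epsilon,c^2\delta}$, not in $\mathcal{R}_{\epsilon,c\delta}$ as you claim (and Theorem \ref{epsilon regularity} gives membership in $\mathcal{R}$, not effective membership at a definite scale), so your $\gamma^{(i)}$ is only shown to lie in $\mathcal{R}_{\epsilon,c^2\delta}$ rather than in the set appearing in the target group. The same slip occurs in the surjectivity step: the geodesics in $X$ joining the $\psi_i$-images are only placed in $\mathcal{R}_{\epsilon,c\delta}$ by part (2), so the curve $\gamma$ you build is not literally in $B_{1.1t}(p)\cap\mathcal{R}_{\epsilon,\delta}$, i.e.\ not in the source of $h$ as the lemma requires. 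The fix, which is what the paper does implicitly, is to apply Lemma \ref{close regular points}(1) to each point of the transferred curve, re-basing against the nearby point of the \emph{original} curve (which lies in $\mathcal{R}_{\epsilon,\delta}$, resp.\ $\mathcal{R}_{\epsilon,\delta/c}$); for $i$ large the re-based pointed spaces are uniformly close over the relevant compact ball, and part (1) then yields $\mathcal{R}_{\epsilon,c\delta}$, resp.\ $\mathcal{R}_{\epsilon,\delta}$, matching the constants in the statement. With that correction your argument goes through; in particular your Reifenberg filling of the small triangles at scale comparable to $c\delta'$, combined with part (2), does land in $\mathcal{R}_{\epsilon,c^2\delta'}$ as claimed.
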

 \begin{proof} For any $\delta,\delta'\in (0,1)$ with $\delta \geq \delta'$, we first construct the claimed homomorphism $h$ for $i$ sufficiently large (depending on $\delta$ and $\delta'$). Fix Gromov-Hausdorff approximations which realize the pointed Gromov-Hausdorff convergence. For any loop $\gamma \subset B_{1.1t}(p) \cap \mathcal{ R}_{\epsilon,\delta}$, we can find $\gamma_i \subset B_{1.2t}(p_i)$ which is point-wise close to $\gamma$ (under the chosen Gromov-Hausdorff approximation). Then $\gamma_i \subset B_{1.2t}(p_i) \cap \mathcal{ R}_{\epsilon,c\delta}$ by Lemma \ref{close regular points}. Then we define that $h([\gamma])=[\gamma_i]$.

To show that $h$ is well-defined, we further assume that $\gamma$ is null-homologous in $B_{0.9}(p)\cap \mathcal {R}_{\epsilon,\delta'}$. We may assume that $\gamma$ is actually contractible so that we can use \cite[Lemma 2.4]{Pan-Wei}. Then for $i$ large $\gamma_i$ is homologous to a sum of small loops in $B_{1}(p_i)\cap \mathcal {R}_{\epsilon,c\delta'}$ where each loop is contained in a $c\delta'/3$-ball; see . Then by Reifenberg's Theorem, each small loop is contractible in a $c\delta'/2$-ball which is contained in $ \mathcal {R}_{\epsilon,c^2\delta'}$ due to Lemma \ref{close regular points}.  Thus $\gamma$ is null-homologous in $B_{1}(p_i)\cap \mathcal {R}_{\epsilon,c^2\delta'}$. By the same argument we also see that $h$ is a homomorphism.

For any $\gamma_i \subset B_{t}(p_i) \cap \mathcal{ R}_{\epsilon,\delta/c}$,  we can find $\gamma \subset B_1(p)$ point-wise close to  $\gamma$. By Lemma \ref{close regular points} and the above argument, we know $\gamma \subset B_{1.1t}(p) \cap \mathcal{ R}_{\epsilon,\delta}$. This proves the last statement.
 \end{proof}

To prove Theorem \ref{main} and Theorem \ref{weak main}, we rely on the following effective version of the loop decomposition theorem. It states that if a loop in the effective regular part is contractible in the whole space, then  it is homologous in the effective regular part to a sum of short loops whose length is comparable to its effective regular scale.

\begin{lemma}\label{small loops}
  Suppose $(X,d,p)\in \mathcal{M}(n,v)$. Let  $\lambda=\lambda(n,v)$ be given in Lemma \ref{regular point dense} and $\mu=\mu(n,v,\lambda/4)$ be given in Lemma \ref{regular point path connected}. Then for any $ r\in (0,1), \delta \le \lambda/16$, any loop $\gamma \subset B_{r}(p) \cap \mathcal{R}_{\epsilon,\delta}$ which is null-homologous in $B_{1}(p)$, we can find at most $C(n,v,\delta)$ many points $\{q_j\}$ in $B_{1}(p)$ so that $\gamma$ is homologous in $B_1(p)\cap \mathcal{R}_{\epsilon,\mu\delta}$  to the sum of loops, each of which is  contained in $$B_{\frac{16\delta}{\lambda}}(q_j) \cap \mathcal{R}_{\epsilon,\mu\delta}$$ for some $j$.
\end{lemma}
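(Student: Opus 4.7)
The plan is to realize the null-homology of $\gamma$ by a singular $2$-chain in $B_1(p)$, subdivide it finely, and then replace the resulting $1$-skeleton (vertices and edges) by a regularized combinatorial analogue built from effective regular points and effective regular geodesics, using Lemmas \ref{regular point dense} and \ref{regular point path connected}. The small loops appearing in the decomposition will be the boundaries of the replaced $2$-simplices, each contained in a small ball around an effective regular point.

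Concretely, first I would use that $\gamma$ is null-homologous in $B_1(p)$ to write $\gamma = \partial c$ with $c = \sum_i n_i \sigma_i$ a singular $2$-chain in $B_1(p)$, and then iterate barycentric subdivision until each $\sigma_i$ has image of diameter less than a small scale $\rho_0 = \rho_0(n, v, \delta) \ll \delta$, to be chosen in terms of the constants $\lambda$ and $\mu$. Next I would cover $B_1(p)$ by a collection of balls $\{B_{16\delta/\lambda}(q_j)\}_{j=1}^N$; by the hypothesis $\delta \le \lambda/16$ these have radius $\le 1$, and Bishop--Gromov volume comparison bounds $N \le C(n, v, \delta)$. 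The $q_j$ will serve as the required points.

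For each vertex $v$ of the subdivided triangulation not already in $\mathcal{R}_{\epsilon, \delta}$, I would invoke Lemma \ref{regular point dense} to produce a nearby effective regular point $\tilde v$ at distance comparable to $\rho_0$ (vertices originating from $\gamma$ itself are already in $\mathcal{R}_{\epsilon, \delta}$ and left unchanged). For each edge I would apply Lemma \ref{regular point path connected} with $r = \lambda/4$ and a suitable scale $s$, replacing the edge by a minimizing geodesic $\tilde e \subset \mathcal{R}_{\epsilon, \mu\delta}$ between the regularized endpoints. The boundary triangle $\tilde \gamma_i$ of each small $2$-simplex then lies in $\mathcal{R}_{\epsilon, \mu\delta}$ with diameter controlled by $\rho_0$, and so is contained in some $B_{16\delta/\lambda}(q_j)$.

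The decisive step is to establish
\[
[\gamma] = \sum_i n_i [\tilde \gamma_i] \quad \text{in } H_1\bigl(B_1(p) \cap \mathcal{R}_{\epsilon, \mu\delta}, \mathbb{Z}\bigr),
\]
i.e.\ to produce a $2$-chain in the effective regular set whose boundary is $\gamma - \sum_i n_i \tilde \gamma_i$. I expect this to be the main technical obstacle: the original simplices $\sigma_i$ and their internal edges may traverse singular points, so the naive ``bigon'' connecting an edge $e$ to its regularization $\tilde e$ need not bound inside the regular part. To overcome this, I would combine the Reifenberg theorem (Theorem \ref{epsilon regularity}), which supplies smooth, Euclidean-type neighborhoods around each effective regular point, with the high-codimension structural input of \cite{CN15} (Theorem \ref{CN}), in order to perturb each local bigon filling into $\mathcal{R}_{\epsilon, \mu\delta}$, if necessary after passing to a further subdivision at scale $\ll \rho_0$. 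Once this identity is in hand, the count is immediate from $N \le C(n, v, \delta)$, completing the proof.
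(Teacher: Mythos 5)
Your construction follows the paper's scheme up to the last step: take a bounding $2$-chain in $B_1(p)$, subdivide at a fine scale, move the vertices (other than those on $\gamma$) onto $\mathcal{R}_{\epsilon,\delta}$ via Lemma \ref{regular point dense}, replace edges by minimizing geodesics in $\mathcal{R}_{\epsilon,\mu\delta}$ via Lemma \ref{regular point path connected}, and count the balls $B_{16\delta/\lambda}(q_j)$ by volume comparison. The gap is exactly at what you call the decisive step. Your proposed resolution --- filling each ``bigon'' between an original edge $e$ and its regularization $\tilde e$ inside $\mathcal{R}_{\epsilon,\mu\delta}$ by ``perturbing'' using the Reifenberg theorem and Theorem \ref{CN} --- is not a proof and would not go through as stated: near a singular point the space is not a manifold, so there is no transversality or general-position mechanism to push a $2$-chain off the singular set, and Theorem \ref{CN} is a statement about tangent cones which gives no effective means of keeping a chain inside the \emph{effective} regular set $\mathcal{R}_{\epsilon,\mu\delta}$ at the definite scale $\mu\delta$. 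The original simplices and their interior edges may pass straight through singular points, and producing such fillings in the regular part is essentially the difficulty the lemma is designed to circumvent.

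The step is in fact unnecessary, and this is the paper's key (elementary) observation: no $2$-chain in the regular set is needed. Make the replacement \emph{once per vertex and once per edge} of the subdivided chain (keeping the vertices and edges lying on $\gamma$ untouched), and form for each $2$-simplex the triangle loop $\tilde\gamma_i$ from the replaced edges. Because the same geodesic $\tilde e$ is used for both simplices sharing an interior edge $e$, the interior edges cancel in pairs in $\sum_i n_i\tilde\gamma_i$ exactly as they do in $\partial c=\gamma$, while the boundary edges are unchanged arcs of $\gamma$. Hence
\[
\sum_i n_i\,\tilde\gamma_i=\gamma
\]
already as singular $1$-cycles, so the identity $[\gamma]=\sum_i n_i[\tilde\gamma_i]$ holds for free in any set containing $\gamma$ and the geodesic edges, in particular in $B_1(p)\cap\mathcal{R}_{\epsilon,\mu\delta}$. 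With that observation your argument closes up and coincides with the paper's proof; without it, the homology identity is left unproven.
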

\begin{remark}\label{rescaling of decomposition}
If we rescale the metric by $\lambda/16\delta$ and let $B_1(q_j')$ denote the ball of radius one with respect to the rescaled metric, then  the set $B_{\frac{16\delta}{\lambda}}(q_j) \cap \mathcal{R}_{\epsilon,\mu\delta}$ becomes 
$$B_{1}(q_j') \cap \mathcal{R}_{\epsilon,\frac{\lambda\mu}{16}}.$$ Notice that $\lambda,\mu$ are independent of the choice of $\delta$. Such a rescaling argument would be critical in the proof of Theorem \ref{weak main} and \ref{main}
\end{remark}
\begin{proof}

Since $\delta/\lambda \le 1$, for any point $x \in B_{1}(p)$, $ B_{\delta/\lambda}(x)\cap \mathcal{R}_{\epsilon,\delta}  \neq \emptyset$ by Lemma \ref{regular point dense}. We can find at most $C(n,v,\delta)$ points $q_j$ in $B_{1}(p) \cap \mathcal{R}_{\epsilon,\delta}$ such  that the distance between each pair is at least $\delta/2\lambda$ and  for any $x \in B_1(p)$, there is some $q_j$ with $B_{3\delta/\lambda}(x) \subset B_{4\delta/\lambda}(q_j)$.

Given $\gamma$ as in the statement of the lemma, let $H\subset B_1(p)$ be a 2-chain with boundary given by $\gamma$. We can find a triangular  decomposition of $H$ so that the image of three vertices of each triangle is contained in a $\delta/\lambda$ ball and each vertex point is $\delta/\lambda$ close to a $\mathcal{R}_{\epsilon,\delta}$ point. We now modify $H$ to a new bounding chain $H'$ as follows. First we keep the triangular vertices of $H$ which are on $\gamma$. Then for those vertices not in $\gamma$, we may replace it by a point in $\mathcal R_{\epsilon, \delta}$ within distance $\delta/\lambda$. Then all the vertices are in $\mathcal{R}_{\epsilon,\delta}$ and the three vertices of each triangle are contained in a $3\delta/\lambda$-ball. In particular, any  two adjacent vertices are contained in some $B_{4\delta/\lambda}(q_j)$. 

We keep the image of $\gamma$ and next construct $H'$. By Lemma \ref{regular point path connected}, for any two adjacent vertices which are not both on $\gamma$, we can connect them by a  minimizing geodesic in $B_{8\delta/\lambda}(q_j) \cap \mathcal{R}_{\epsilon,\mu\delta}$.
Therefore the new triangle loop is contained in $B_{16\delta/\lambda}(q_j) \cap \mathcal{R}_{\epsilon,\mu\delta}$ for some $q_j$. By construction $\gamma$ is homologous to the sum of such loops. 
\end{proof}

\begin{proof}[Proof of Theorem \ref{weak main}]
 Consider the set 
\begin{equation}
    I:=\{v\in (0,1] \mid \text{Theorem } \ref{weak main} \text { holds for all }v'\geq v\}.
\end{equation}
First, we claim  $1\in I$. Indeed, by Colding's volume convergence theorem and Anderson's $\epsilon$-regularity theorem, if we  choose $t$ and $\delta'$ sufficiently small (depending only on $n$) then we have 
\begin{equation}
    B_{\sqrt t}(p)\subset \mathcal{R}_{\epsilon,\delta'}.
\end{equation}
From Lemma \ref{contractible in big space} and a standard rescaling argument, we see that Theorem \ref{weak main} holds for $v=1$. 

Now let $v_0=\inf I$. It suffices to prove $v_0=0$. Suppose $v_0>0$, then let $\lambda=\lambda(n,v_0/2),\mu=\mu(n,v_0/2,\lambda/4)$ be the constants given in Lemma \ref{close regular points} and let $L=2\max\{\frac{1}{t_0},L_0\}+1$, where $t_0=t_0(n,\frac{v_0}{2})$ and $L_0=L_0(n,\frac{v_0}{2})$ is the constant given by Lemma \ref{contractible in big space}. We set 
\begin{equation}
    \delta_0=\frac{\lambda\mu}{16}.
\end{equation} We first show that there exists $\tau\in (0,\frac{v_0}{2})$ such that the statement in Theorem \ref{weak main} holds for $v\in [v_0-\tau,v_0]$ and for this given $\delta_0$.

Suppose not then there exist a sequence  $(X_i,d_i,p_i) \in \mathcal{M}(n,v_0-\frac{1}{i})$ such that as $i\rightarrow \infty$,
\begin{equation}\label{eq-order goes to infinity}
	\left| \mathrm{Im}(H_1(B_{\frac{1}{i}}(p_i) \cap \mathcal{ R}_{\epsilon,\frac{\delta_0}{i}},\mathbb{Z}) \rightarrow H_1(B_1(p_i)\cap \mathcal {R}_{\epsilon,\frac{\delta_0}{i^2}},\mathbb{Z})) \right| \rightarrow \infty.
\end{equation}
Scaling up the metric $d_i$ by $i$ and passing to a subsequence, we may assume $$(X_i, i\cdot  d_i, p_i) \xrightarrow{pGH}(X,p).$$  Then $(X,p)\in \mathcal M(n,v_0/2)$ and for any $q\in X$ and $r>0$ we have 
$\underline{\vol}(B_r(q))\geq v_0.$
Without loss of generality we may assume $X\neq \mathbb R^n$.  We divide the discussion into two cases.

{\bf Case 1.}  $X$ is not a metric cone. 
Then we can find some $\sigma>0$ so that $\underline{\vol}(B_r(q)) \ge v_0+2\sigma$ for all $q \in B_{L+1}(p)$ and $r\in (0,1]$. Then by the volume convergence theorem \cite{Colding}, for $i$ sufficiently large, we have $(X_i',q') \in \mathcal{M}(n,v_0+\sigma)$ for any $q' \in B_{L}(p_i')$. Here we use $(X_i',p_i')$ to denote the metric space with respect to the rescaled metric $i\cdot d_i$ on $X_i$. Since we have assumed that Theorem \ref{weak main} holds for $v=v_0+\sigma$, there exist constants $t=t(n,v_0+\sigma)$, $\delta'=\delta(n,v_0+\sigma,\delta_0)$ and $C=C(n,v+\sigma,v_0)$ such that for any $q'\in B_{L}(p_i')$, we have  
\begin{equation}\label{bound on t balls}
    \left| \mathrm{Im}\left( H_1\left( B_{t}(q') \cap \mathcal{R}_{\epsilon, t\delta_0}, \mathbb{Z} \right) \rightarrow H_1\left( B_1(q') \cap \mathcal{R}_{\epsilon, \delta'}, \mathbb{Z} \right) \right) \right| <C.
\end{equation}
Then we want to show that the group 
\begin{equation}\label{uniform upper bound}
    \mathrm{Im}(H_1(B_1(p_i')\cap \mathcal R_{\epsilon,\delta_0},\mathbb Z)\rightarrow H_1(B_i(p_i')\cap \mathcal{R}_{\epsilon, \frac{\delta_0}{i}},\mathbb Z))
    \end{equation}has a uniform upper bound independent of $i$ and hence contradict with \eqref{eq-order goes to infinity}.
By Lemma \ref{contractible in big space}, in order to show that\eqref{uniform upper bound} has a uniform upper bound, it suffices to estimate the order the subgroups which consists of loops which are null-homologous in $H_1(B_{\frac{1}{t_0}}(p_i'),\mathbb Z)$. For such a loop $\gamma$, we can apply Lemma \ref{small loops} to get that $\gamma$ is homologous in $B_{\frac{1}{t_0}}(p_i')\cap \mathcal R_{\epsilon, t\delta_0}$ to the sum of loops, each of which is contained in 
\begin{equation}
    B_{t}(q_j')\cap \mathcal{R}_{\epsilon,t\delta_0}
\end{equation}for some $q_j'\in B_{\frac{1}{t_0}}(p_i')$. Then by \eqref{bound on t balls}, we obtain that \eqref{uniform upper bound} has a uniform upper bound independent of $i$.

{\bf Case 2.} $X$ is a metric cone. Then we may write $X= \mathbb{R}^{n-k} \times C(Y)$, where $C(Y)$ does not split any line.    By Theorem \ref{CN} we know $k\geq 4$.  If $B_{1.5L}(p)$ doesn't contain any cone vertex points $\mathbb{R}^{n-k} \times \{o_{C(Y)}\}$, then we can use the volume induction argument as in the Case 1. Therefore we may assume the distance between $p$ and the set $\mathbb{R}^{n-k} \times \{o_{C(Y)}\}$ is less $1.5L$.

 We denote by ${\bf r}$ the radial function on $C(Y)$. We identify $Y$ with the cross section in $C(Y)$ given by ${\bf r}=2L$. We also identify $C(Y)$ with $\{0^{n-k}\}\times C(Y)$. So $Y$ and $C(Y)$ are naturally viewed as subsets in $X$. 
 Then there exists a $\sigma>0$ such that for all $q\in Y$, $\underline{\mathrm{Vol}}(B_r(q))\geq v_0+\sigma$ for all $x\in B_1( q) \subset X$ and $r\in (0,1]$. 
Then since Theorem \ref{weak main} holds for $v_0+\sigma$, there exist constants $t=t(n,v_0+\sigma)$, $\delta'=\delta(n,v_0+\sigma,\delta_0)$ and $C=C(n,v+\sigma,\delta_0)$ such that for any $q\in Y$, we have  
\begin{equation}\label{case2 bound on t balls}
    \left| \mathrm{Im}\left( H_1\left( B_{t}(q) \cap \mathcal{R}_{\epsilon, t\delta_0}, \mathbb{Z} \right) \rightarrow H_1\left( B_1(q) \cap \mathcal{R}_{\epsilon, \delta'}, \mathbb{Z} \right) \right) \right| <C.
\end{equation} 

We only need to show that there exists $\eta>0$ so that 
\begin{equation}\label{need to prove in case 2}
    \mathrm{Im}(H_1(B_{1.1}(p) \cap \mathcal{ R}_{\epsilon,c\delta_0},\mathbb{Z}) \rightarrow H_1(B_{3L}(p)\cap \mathcal {R}_{\epsilon,\eta},\mathbb{Z}) < \infty,
\end{equation}
where $c$ is the constant in Lemma \ref{contradiciton sequence}. Then we will get a contradiction due to \ref{eq-order goes to infinity} and Lemma \ref{contradiciton sequence}.

We may assume that the cone point in $B_{1.5L}(p)$ is $(0^{n-k},o) \in X$.  Consider the natural projection map $\pi: X\setminus (\mathbb{R}^{n-k}\times \{o\}) \to Y$. Take  a small $\eta$ to be decided later.
Since cone vertices $\mathbb R^{n-k}\times \{o\}$ are all singular points, there is a natural homomorphism $h$ induced by $\pi$, from 
$$\mathrm{Im}(H_1(B_{1.1}(p) \cap \mathcal{ R}_{\epsilon,c\delta_0},\mathbb{Z}) \rightarrow H_1(B_{3L}(p)\cap \mathcal {R}_{\epsilon,\eta},\mathbb{Z})$$
to $H_1(Y,\mathbb{Z})$. By Theorem \ref{pi1 bounded of the link}, we have a uniform bound $|H_1(Y,\mathbb Z)| \leq v_0^{-1}$. Therefore we only need to show that $\ker (h)$ is bounded.

Take any $[\gamma] \in \ker(h)$ where $\gamma$ is a loop in  $B_{1.1}(p) \cap \mathcal{ R}_{\epsilon,c\delta}$ and the projection of $\gamma$ to $Y$ is contractible. Since the radial function on $B_{1.1}(p)$ is  strictly less than $2L$, $\pi(B_{1.1}(p)\cap \mathcal {R}_{\epsilon,c\delta_0}) \subset  Y \cap \mathcal{R}_{\epsilon,c\delta_0}$. Thus we may continuously deform $\gamma$ within $\mathcal R_{\epsilon, c\delta_0}$, so that it is contained in $Y$.   

Notice that the diameter of $Y$ has a uniform upper bound $4L\pi$. We take a small $t'<\min\{t(n,v_0+\sigma),c\}$ and then we take $\eta < t'\delta'/t$.
Applying Lemma \ref{small loops} to $X$, we see that there are $C(n,v_0,t')$ many points $\{q_j\}$ in $Y$ so that $\gamma$ is homologous in $\mathcal{R}_{\epsilon,\eta}$  to the sum of loops in $B_{t'}( q_j) \cap \mathcal{R}_{\epsilon,t' \delta_0}$. Rescaling the metric if necessary, we can apply \eqref{case2 bound on t balls} and conclude that for each $q_j \in Y$
\begin{equation}
    \left| \mathrm{Im}\left( H_1\left( B_{t'}(q_j) \cap \mathcal{R}_{\epsilon, t'\delta_0}, \mathbb{Z} \right) \rightarrow H_1\left( B_1(q_j) \cap \mathcal{R}_{\epsilon, \eta}, \mathbb{Z} \right) \right) \right| <C.
\end{equation} 
Then by a decomposition argument as in the last paragraph of the case 1 and using the fact that $B_1(q_j)\subset B_{3L}(p)$, we can achieve \eqref{need to prove in case 2}. The case 2 is proved.

%

\

Then we show that $[v_0-\tau,v_0]\in I$, that is Theorem \ref{weak main} holds for $[v_0-\tau,v_0]\in I$ and all $\delta>0$. Given $v\geq v_0-\tau\geq \frac{v_0}{2}$ and $(X,d,p)\in \mathcal{M}(n,v)$ and $\delta>0$. Without loss of generality, we may assume $\delta\leq \delta_0=\frac{\lambda\mu}{16}$. Then we claim that $t=t(n,v,\delta_0)$ will satisfy Theorem \ref{weak main}. Indeed consider a loop $\gamma \in B_t(p)\cap \mathcal{R}_{\epsilon,t\delta}$ . Then by Lemma \ref{small loops}, there are at most $C=C(n,v,\delta)$ many points $q_j\in B_1(p)$ such that $\gamma$ is homologous in $B_1(p)\cap \mathcal{R}_{\epsilon,\mu\delta}$ to the sum of loops each of which is contained in \begin{equation}
    B_{16t\delta/\lambda}(q_j)\cap \mathcal R_{\epsilon,t\mu\delta}
\end{equation} for some $q_j\in B_1(p)$. As mentioned in Remark \ref{rescaling of decomposition}, we rescale the metric by $\frac{\lambda}{16\delta}$, then with respect to the rescaled metric, it becomes $$ B_t(q_j')\cap \mathcal{R}_{\epsilon,\delta_0t}$$
  By the choice of $t$, Theorem \ref{weak main} holds for $v$ and $\delta_0$ and $t(n,v,\delta_0)$ and therefore with respect to the original metric, we know that there exists $\delta'=\delta'(n,v,\delta)$ and $C=C(n,v,\delta_0)$ 
\begin{equation}
    \left|\mathrm{Im}( H_1(B_{16t\delta/\lambda}(q_j)\cap \mathcal R_{\epsilon,t\mu\delta},\mathbb Z)\rightarrow H_1(B_{16\delta/\lambda}(q_j) \cap \mathcal{R}_{\epsilon,\delta'},\mathbb Z)\right|\leq C.
\end{equation}Then we can get an upper bound for 
\begin{equation}
\mathrm{Im}\left( H_1\left( B_t(p) \cap \mathcal{R}_{\epsilon, t\delta}, \mathbb{Z} \right) \rightarrow H_1\left( B_1(p) \cap \mathcal{R}_{\epsilon, \delta'}, \mathbb{Z} \right) \right).
\end{equation}

\end{proof}

\begin{proof}[Proof of Theorem \ref{main}]
Fix \( n \) and \( v \), and assume \( (X, d, p) \in \mathcal{M}(n, v) \). Let \( \lambda = \lambda(n, v) \) be the constant given in Lemma~\ref{regular point dense}, and let \( \mu = \mu(n, v, \lambda/4) \) be the constant from Lemma~\ref{regular point path connected}. Additionally, let \( t_0 = t_0(n, v) \) and \( L_0 = L_0(n, v) \) be the constants provided by Lemma~\ref{contractible in big space}. 
Furthermore, define  
\[
\tilde{t} = t(n, v), \quad 
\tilde{C} = C(n, v, \mu\lambda/16), \quad 
\tilde{\delta}' = \delta'(n, v, \mu\lambda/16)
\]  
as given by Theorem~\ref{weak main}.

Now, we choose \( \delta>0 \) sufficiently small such that
\[
\frac{16\delta}{\lambda \tilde{t}} < 0.1 \quad \text{and} \quad \delta < \frac{\lambda}{16}.
\]
For any \( t \leq t_0 \) and any loop \( \gamma \subset B_t(p) \cap \mathcal{R}_{\epsilon, \delta} \), by Lemma~\ref{contractible in big space}, we know that \( \gamma^{L_0!} \) is contractible in \( B_{0.9}(p) \).
Since \( \delta < \lambda/16 \), by Lemma~\ref{small loops} there exist finitely many points \( \{q_j\} \subset B_{1}(p) \) such that \( \gamma^{L_0!} \) is homotopic in \( \mathcal{R}_{\epsilon, \mu\delta} \cap B_1(p) \) to a sum of loops \( \gamma_j \), where for each \( j \), we have
\[
\gamma_j \subset B_{16\delta/\lambda}(q_j) \cap \mathcal{R}_{\epsilon, \mu\delta}.
\]
Next, we rescale the metric \( d \) on \( B_{16\delta/\lambda}(q_j) \cap \mathcal{R}_{\epsilon, \mu\delta} \) by a factor of \( \frac{\tilde{t}\lambda}{16\delta} \). Under this rescaled metric, each loop \( \gamma_j \) is contained within  
\[
B_{\tilde{t}}(q_j') \cap \mathcal{R}_{\epsilon,  {\tilde{t}}\mu\lambda/16}.
\]

By Theorem \ref{weak main} and our choice of \( \tilde{t}, \tilde{C} \), and \( \tilde{\delta}' \), we conclude that \( \gamma_j^{\tilde{C}!} \) is null-homologous in  
\[
B_1(q_j') \cap \mathcal{R}_{\epsilon, \tilde{\delta}'} 
\]
with respect to the rescaled metric.
Reverting to the original metric, we find that \( \gamma_j^{\tilde{C}!} \) is null-homologous in  
\[
B_{16\delta / (\lambda \tilde{t})}(q_j) \cap \mathcal{R}_{\epsilon, 16\tilde{\delta}'\delta / (\lambda \tilde{t})}.
\]
Then we can let
\[
\delta' = \delta'(n, v, \delta) = \min \left\{ \mu\delta, \frac{16\tilde \delta'\delta}{\lambda \tilde{t}} \right\}.
\]
and conclude that \( \gamma^{L_0! \tilde{C}!} \) is null-homologous in \( B_{1}(p) \cap \mathcal{R}_{\epsilon, \delta'} \).


\end{proof}

\begin{proof}[Proof of Theorem \ref{main thm-unrescaled version}]

We fix \( \epsilon = \frac{\epsilon(n)}{2} \) as before, where \( \epsilon(n) \) is the constant given in Theorem~\ref{epsilon regularity}. By definition, we have
\[
\mathcal{R} = \mathcal{R}_{\epsilon} = \bigcup_{\delta > 0} \mathcal{R}_{\epsilon, \delta}.
\]
Moreover, by~\cite[Lemma 2.5]{Pan2024}, we know that for any compact subset \( K \subset \mathcal{R} \), there exists some \( \delta > 0 \) such that
\[
K \subset \mathcal{R}_{\epsilon, \delta}.
\]
Since the image of any loop is compact, the statement follows from the fact that the constants \( t \) and \( C \) in Theorem~\ref{main} are independent of \( \delta \).

\end{proof}

\section{Applications}\label{sec--applications}

\begin{proof}[Proof of Corollary \ref{cor-pi1 of cone}
]
   By assumption, the regular set $\mathcal R$ of $\mathcal C$ is Ricci-flat K\"ahler, so on $\mathcal R$ the volume form $\omega^n$ defines a flat connection on the canonical bundle $K_{\mathcal R}$. By Theorem \ref{main thm-unrescaled version}, there is an integer $N$ such that $K_{\mathcal R}^{\otimes N}$ admits a parallel section $\Omega$ with $\|\Omega\|=1$. It follows that $$\int_{B\cap \mathcal R}(\Omega\wedge\bar\Omega)^{1/N}<\infty,$$ where $B$ is the unit ball around the vertex. By \cite[Lemma 6.4]{EGZ}, $\mathcal C$ has klt singularities. 
   
\end{proof}
\begin{remark}
    Combining with the results in \cite{Braun2021} and \cite{Xu-Zhuang2021}, we indeed have the explicit bound:
    $$|\pi_1(\mathcal R)|\leq \frac{1}{\vol(\mathcal C)}.$$
\end{remark}
\begin{remark}As mentioned earlier, it is known that 
 $\mathcal C$ is naturally a polarized normal affine variety \cite{DS15,LiuSz2}. Consequently homology groups of $\mathcal{C}^{reg}$ are finitely generated. Therefore Theorem \ref{main thm-unrescaled version} provides a proof, without relying on \cite{Braun2021}, that $H_1(\mathcal{C}^{reg},\mathbb Z)$ is finite. 
\end{remark}

\begin{proof}[Proof of Corollary \ref{cor--klt in general}]
    Fix a point $p\in Z$.
    By Corollary \ref{cor-pi1 of cone}, we know that there exists a positive integer $m$ such that for any tangent cone $\mathcal C$ at $p$, $K^{\otimes m}_{\mathcal{C}^{reg}}$ admits a parallel section. 
    
    For any $t>0$, we rescale the K\"ahler form on $Z$ by $t^{-2}$ and denote by $\omega_t$ the rescaled K\"ahler form and by $d_t$ denote the rescaled distance function to $p$. We still use $B_r(p_t)$ to denote the  ball of radius $r$ centered at $p$ with respect to the rescaled metric $d_t$. By assumption there exist a sequence of unit balls $B_1(p_{t,i})$ contained in a smooth polarized K\"ahler manifold $(Z_{t,i}, \omega_{t,i},d_{t,i},p_{t,i})$ such that 
    \begin{equation}\label{eq-smooth approximation}
        d_{\mathrm{pGH}}(B_1(p_{t,i}),B_1(p_t))=\Psi(i^{-1};t),
    \end{equation}where $\Psi(a;b)$ denotes a nonnegative function satisfying that for any given parameter $b$, $\lim_{a\rightarrow 0}\Psi(a;b)=0$. In the following, when we say \( i \) is sufficiently large, we always allow this to depend on \( t \). In the end, we will first choose \( t \) to be small and then let \( i \to \infty \).
    
    By Lemma \ref{regular point dense}, there exists $\lambda>0$ such that for any $\delta, t\in (0,1)$ and $i$ sufficiently large, we can find a point
    \begin{equation}
       q_{t,i}\in  B_{\delta}(p_{t,i})\cap \mathcal R_{\epsilon, \lambda \delta}.
    \end{equation}
    Using the argument in \cite[Section 3.2]{DS12}, we obtain that for $t$ sufficiently small and $i$ sufficiently large, we have on $ B_{0.9}(p_{t,i})$, the line bundle $K_{Z_{t,i}}^{\otimes m}$ admits a smooth section $\sigma_{t,i}$ such that 
    \begin{equation} 
    \begin{aligned}
         &\int_{B_{1/2}(p_{t,i})}|\sigma_{t,i}|^2\omega_{t,i}^n\geq 1, \int_{B_{0.9}(p_{t,i})}|\sigma_{t,i}|^2\omega_{t,i}^n\leq 10 \\
           & \int_{B_{0.9}(p_{t,i})}|\pp \sigma_{t,i}|^2\omega_{t,i}^n=\Psi(t;\delta)+\Psi(i^{-1};t,\delta),  \quad |\sigma_{t,i}(q_{t,i})|\geq 1.
    \end{aligned}
    \end{equation} By \cite[Proposition 3.1]{LiuSz1}, we know that for $t$ sufficiently small and $i$ sufficiently large, we can find a K\"ahler potential $\varphi_{t,i}$ on $B_1(p_t')$ such that 
    \begin{equation}\label{estimate on the potential}
      \omega_{t,i}=\ii\partial\pp \varphi_{t,i} \text{ and }  |\varphi_{t,i}-\frac{1}{2}d_{t,i}^2|=\Psi(i^{-1};t)+\Psi(t).
    \end{equation}Since the sublevel set of a strict smooth plurisubharmonic function has a pseudoconvex boundary, \eqref{estimate on the potential} ensures that we can apply the H\"ormander $L^2$-estimate on an open set $U_{t,i}$ with $B_{1/2}(p_{t,i})\subset U_{t,i}\subset B_1(p_{t,i})$. Then for $t$ sufficiently small and $i$ sufficiently large, we can solve $\pp \tau_{t,i}=\pp \sigma_{t,i}$ on $B_{1/2}(p_{t,i})$ with the estimate $$\int_{B_{1/2}(p_{t,i})} |\tau_{t,i}|^2\omega_{t,i}^n\leq \frac{1}{10} \int_{B_{0.9}(p_{t,i})}|\pp \sigma_{t,i}|^2\omega_t^n=\Psi(t;\delta)+\Psi(i^{-1};t,\delta).$$
By the choice of $q_{t,i}$ and standard elliptic estimates, we get 
\begin{equation}
    |\tau_{t,i}|(q_{t,i})\leq {\Psi(t;\delta,\lambda)}+\Psi(i^{-1};t,\delta,\lambda).
\end{equation}
Let $s_{t,i}=\sigma_{t,i}-\tau_{t,i}$, then we obtain that $$|s_{t,i}|(q_{t,i})=1-\Psi(t;\delta)-\Psi(i^{-1};t,\delta).$$ Moreover we have $\pp s_{t,i}=0$ and $\int_{B_{1/2}(p_{t,i})}|s_{t,i}|^2\omega_{t,i}^n\leq 11$.
Therefore  by the standard estimates for holomorphic sections (see  \cite[Section 2]{DS12}), there exists a constant $C$ independent of $\delta,$ $t$ or $i$, such that for $t$ sufficiently small and  $i$ sufficiently large
\begin{equation}\label{gradient bound}
    \|s_{t,i}\|_{L^\infty(B_{1/4}(p_t'))}+\|\nabla s_{t,i}\|_{L^\infty(B_{1/4}(p_{t,i}))}\leq C.
\end{equation}Now we choose $\delta$ such that $C\delta \leq 1/4$ and for this fixed $\delta$ we first choose $t$ sufficiently small and then $i$ sufficiently large such that we have 
\begin{equation}
    |s_{t,i}|(q_{t,i})\geq 1/2.
\end{equation}Then by \eqref{gradient bound}, we know that the holomophic section $s_{t,i}$ satisifies 
\begin{equation}\label{eq--bounds befores limit}
   |s_{t,i}|\geq \frac{1}{4} \text{ on $B_{1/4}(p_{t,i})$ } \quad \text{and} \quad  \int_{B_{1/2}(p_{t,i})}(s_{t,i}\wedge \bar s_{t,i})^{\frac{1}{m}}<11
\end{equation}Since by \eqref{eq-smooth approximation}, the ball $B_1(p_{t,i})$ converges to $B_t$ as $i\rightarrow \infty$ and due to the Ricci two sided bound, the metric tensor converges in the $C^{1,\alpha}_{loc}$ sense. Therefore by the estimate in \eqref{eq--bounds befores limit} and let $i\rightarrow \infty$, we get for $t$ sufficiently small, there exists a holomorphic section $s_t$ of $K_{Z^{reg}}^{\otimes m}$ on $B_{1/2}(p_t)\cap Z^{reg}$ such that $s_t$ is nowhere vanishing on $B_{1/8}(p_t)\cap Z^{reg}$ and 
\begin{equation*}
    \int_{B_{1/4}(p_{t})}(s_{t}\wedge \bar s_{t})^{\frac{1}{m}}<11.
\end{equation*}
Therefore 
$Z$ has log terminal singularities near $p$ by \cite[Lemma 6.4]{EGZ}. Since $p$ is arbitrary, we know that $Z$ has log terminal singularities.
    
\end{proof}

\begin{proof}[Proof of Corollary \ref{cor--compact version}]

Let $X$ be given in the Corollary \ref{cor--compact version}. Then by the Bonnet-Myers theorem and \cite[Corollary 7.1]{Pan-Wei}, we know there exists a constant $C_0=C_0(n,v,A)$ such that 
\begin{equation}
    |\pi_1(X)|\leq C_0.
\end{equation} Let $\iota:\mathcal R\hookrightarrow X$ denote the standard embedding and it induces a group homomorphism $$\iota_*: \pi_1(\mathcal R)\rightarrow \pi_1(X).$$ In order to estimate the exponent of the group $H_1(\mathcal R, \mathbb Z)$, it suffices to estimate order of the image of  elements in $\ker(\iota_*)$ under the natural map $\pi_1(\mathcal R)\rightarrow H_1(\mathcal R,\mathbb Z)$. Suppose $\gamma\in \ker(\iota_*)$. Since $\gamma$ is contractible in $X$, applying Lemma \ref{small loops}, we know that for any small positive $s$ there exist finitely many balls $B_{s}(p_j)$, such that $\gamma$ is homologous to a sum of loops, each of which is contained in $B_{s}(p_j)$ for some $j$.  By Theorem \ref{main thm-unrescaled version}, we know that there exists $t=t(n,v,A)$ and $L=L(n,v,A)$ such that for any $p\in X$, the group \begin{equation}\label{eq-local finiteness}
	\mathrm{Im}(H_1(B_t(p)\cap \mathcal R,\mathbb Z)\rightarrow H_1(\mathcal R,\mathbb Z))
\end{equation}has a uniform exponent bound. Then the results follows from the fact that the relative first homology group \eqref{eq-local finiteness} has a uniform exponent bound by choosing $s=t(n,v,A).$
\end{proof}

 \begin{remark}

We remark that both the positive lower bound and  the upper bounds on positive Ricci curvature are necessary. By the standard Kummer construction (see, for example, \cite{Donaldson2010}), it is known that there exists a sequence of non-collapsing Ricci-flat Kähler metrics converging to \( T^4/\mathbb{Z}_2 \), and we have  
\begin{equation}   H^1((T^4/\mathbb{Z}_2)^{\text{reg}}, \mathbb{R}) \neq 0.  
\end{equation}  
Furthermore, a topological 2-dimensional sphere with finitely many cone points can be obtained as the Gromov–Hausdorff limit of a sequence of non-collapsing metrics with sectional curvature bounded below by a positive number, for example, by rounding out the cone points.
 \end{remark}

\section{Discussion}\label{sec--discussion}
It is a folklore question whether a non-collapsed Ricci bounded (for example, Einstein) limit space has a real analytic structure. This is known if the dimension is at most 4 \cite{CN15} or if we work in the polarized K\"ahler setting \cite{DS12}.  Related to this, it is interesting to explore further the local topology of the regular loci in a non-collapsed bounded Ricci limit space. Theorem \ref{main thm-unrescaled version} only represents a first step in this direction. One expects a much stronger result to hold
\begin{conjecture}[Finite local fundamental group]\label{conj1}
    Given \( v \in (0,1] \) and \( n \in \mathbb{Z}_+ \), there exist constants \( t= t(n, v) > 0 \) and $C=C(n,v)>0$ such that for any \( (X, d, p) \in \mathcal{M}(n, v) \), we have
    \[
   \left| \mathrm{Im}\left( \pi_1(B_t(p) \cap \mathcal{R}) \rightarrow \pi_1(B_1(p) \cap \mathcal{R}) \right)\right|\leq C.
    \]
\end{conjecture}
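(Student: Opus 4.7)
The plan is to upgrade Theorem \ref{main thm-unrescaled version} to the non-abelian setting by running the same double induction (on the volume ratio $v$, with an auxiliary induction on scale) that the paper uses for $H_1$, while replacing the commutative decomposition of loops by a more delicate analysis of the image group. Concretely, one would argue by contradiction: if the conclusion fails there is a sequence $(X_i, p_i)\in \mathcal M(n, v_i)$ with $v_i\to v_0 := \inf I$ (where $I$ is the set of admissible $v$) such that the image of $\pi_1(B_{r_i}(p_i)\cap \mathcal R) \to \pi_1(B_1(p_i)\cap \mathcal R)$ has order tending to infinity. After rescaling one passes to a Gromov--Hausdorff limit $(X,p)\in \mathcal M(n, v_0/2)$. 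If this limit is not a metric cone, Colding's volume convergence produces a definite volume jump on a larger ball, so the inductive hypothesis together with the loop decomposition of Lemma \ref{small loops} (which for $\pi_1$ presents $\gamma$ as a product $\prod_j \alpha_j\gamma_j\alpha_j^{-1}$ of conjugates of small loops) yields a contradiction in essentially the same manner as in Theorem \ref{weak main}.

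The genuinely new case is when the limit is a cone $X=\mathbb R^{n-k}\times C(Y)$ with $k\geq 4$ (Theorem \ref{CN}). The natural radial projection from $X\setminus(\mathbb R^{n-k}\times\{o\})$ to the link $Y$ induces a homomorphism from the image group to a subgroup of $\pi_1(Y)$; by Theorem \ref{pi1 bounded of the link} this target has order at most $v_0^{-1}$, so one is reduced to bounding the kernel, consisting of loops whose projection to $Y$ is null-homotopic. Such a loop may be continuously deformed inside $\mathcal R$ to lie in the regular part of the cross section $\{0\}\times Y$, so the relevant subgroup is controlled by $\pi_1(Y^{reg})$. Since $Y$ is not \emph{a priori} a member of $\mathcal M(n-1,\cdot)$, one must instead cover $Y$ by finitely many small balls each of which has strictly larger volume ratio than $v_0$ at unit scale in $X$, apply the inductive hypothesis there, and then glue the resulting bounds, using Lemma \ref{regular point path connected} to connect base points through $\mathcal R_{\epsilon,\delta}$.

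The main obstacle is precisely the non-commutativity of $\pi_1$ in combination with Lemma \ref{small loops}. For the homology argument of the paper it suffices to decompose a loop into finitely many short loops supported in small balls, bound the exponent of each, and take the least common multiple. For $\pi_1$, even when each conjugate $\alpha_j\gamma_j\alpha_j^{-1}$ appearing in the decomposition has bounded order in the target group, the product of many such elements can have infinite order in a nonabelian group, and the number of factors produced by Lemma \ref{small loops} grows without control as the scale shrinks. Consequently a direct iteration of the paper's strategy cannot succeed: one must bound the image subgroup as a \emph{whole} rather than element-by-element.

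A promising way around this is to work with the universal cover $\widetilde{B_1(p)\cap \mathcal R}$ directly and bound the relevant deck transformation group by a volume-doubling argument in the spirit of Anderson's original theorem, paralleling Lemmas \ref{manifolds loop contractible} and \ref{contractible in big space}: the image of $\pi_1(B_t(p)\cap \mathcal R)$ is generated by loops of length $O(t)$ that lift to short paths, and if its order were too large one would fit too many disjoint fundamental domains inside a ball of definite radius about a chosen lift of the base point, contradicting the non-collapsing volume bound on $B_1(p)\cap \mathcal R$ coming from the effective density of $\mathcal R_{\epsilon,\delta}$ (Lemma \ref{regular point dense}). Making this rigorous requires a substitute for the global volume comparison on the incomplete Riemannian manifold $\mathcal R$; the codimension-at-least-$4$ condition of Theorem \ref{CN} should allow a capacity argument to replace $\mathcal R$ by $X$ up to controlled error, and this is where most of the new technical work would be concentrated.
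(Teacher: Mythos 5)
There is a fundamental issue you should be aware of: the statement you are trying to prove is not a theorem of the paper at all --- it is stated there as Conjecture \ref{conj1}, an open problem. The authors explicitly explain in the introduction why their method stops short of it: the non-commutativity of $\pi_1$ together with the loss of control on \emph{how many} small loops are produced when passing to smaller balls is precisely what forces them to settle for an exponent bound on local $H_1$ (Theorem \ref{main thm-unrescaled version}) rather than an order bound on the local fundamental group. Your proposal correctly rediscovers this obstruction in your third paragraph, and you rightly conclude that ``a direct iteration of the paper's strategy cannot succeed.'' But what you then offer is not a proof: it is a research program whose essential steps are exactly the open difficulties. The universal-cover/volume-doubling idea in the spirit of Lemmas \ref{manifolds loop contractible} and \ref{contractible in big space} does not transfer to $B_1(p)\cap\mathcal R$, because those lemmas are run on the smooth ambient manifolds $M_i$ (or on $X$ itself), where relative volume comparison and the fundamental-domain counting are available; the regular set $\mathcal R$ is an incomplete $C^{1,\alpha}$ manifold, geodesics and fundamental domains in its universal cover can escape through the singular set, and there is no Bishop--Gromov-type comparison for the lifted incomplete metric. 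The ``capacity argument'' you invoke to replace $\mathcal R$ by $X$ using the codimension-$4$ condition is not sketched in any checkable form, and you acknowledge yourself that this is where ``most of the new technical work would be concentrated.''

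Two further specific gaps: in your cone case, the kernel of the projection-induced map consists of loops whose image is null-homotopic in $Y$, not in $Y^{\mathrm{reg}}$; a null-homotopy in $Y$ may pass through the singular set of the link, so the reduction to $\pi_1(Y^{\mathrm{reg}})$ (or to an inductive statement on small balls covering $Y$) does not follow --- and once you try to repair it via Lemma \ref{small loops}, you are back to a product of conjugates of uncontrolled length, i.e.\ the same obstruction. Second, even in the non-cone case, your claim that the inductive hypothesis ``together with the loop decomposition \ldots yields a contradiction in essentially the same manner as in Theorem \ref{weak main}'' is exactly the step that fails for $\pi_1$: in the abelian setting the paper bounds the image group because a sum of boundedly many classes, each of bounded order (or bounded exponent), is controlled, whereas for a nonabelian image a product of conjugates of elements of bounded order need not have bounded order, and the number of factors is not uniform. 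So the proposal does not prove the conjecture; it is a reasonable statement of the problem and of one possible line of attack, but every step beyond what the paper already proves remains open.
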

Furthermore, for a given $(X, d, p)\in \mathcal M(n, v)$,  for $t>0$ small, one expects that the group $$\pi_1^{loc}(X, p):=
    \mathrm{Im}\left( \pi_1(B_t(p) \cap \mathcal{R}) \rightarrow \pi_1(B_1(p) \cap \mathcal{R}) \right)
   $$ is independent of $t$, and  its order is bounded by $\frac{1}{\vol_p}$, where   the \emph{volume density}  $\vol_p$ is defined to be  $\vol(\mathcal C)$ for any tangent cone $\mathcal C$ at $p$.  In this case, we can define this group to be the \emph{local fundamental group} at $p$, which is a local invariant of $X$.

In another direction, even without an upper on the Ricci curvature, one expects that a version of Theorem \ref{main thm-unrescaled version} and Conjecture \ref{conj1} to hold. In this context, the statement needs to be reformulated; in particularly it is necessary to enlarge $\mathcal 
R$ by including the codimension-2 singular set. A special situation of interest arises again in K\"ahler geometry. Let $\mathcal{P}(n,v)$ denote the set of Gromov-Hausdorff limits of pointed complete polarized K\"ahler manifolds of complex dimension $n$ satisfying $\mathrm{Ric}(g)\geq -1$ and   
\begin{equation}
   \underline{\vol} (B_r(q)):=\frac{\vol(B_r(q))}{\omega_nr^n} \geq v, \text{ for any $r\in (0,1]$ and $q\in B_1(p)$.}
\end{equation} Then by \cite{DS15,LiuSz1}, we know that any element $Z$ in $\mathcal{P}(n,v)$ admits a normal complex analytic variety structure. In this case the complex analytic regular part  $\mathscr{R}$ of $Z$ may be strictly larger than the metric regular part $\mathcal R$. A natural generalization of Theorem \ref{main thm-unrescaled version} to  this setting is the following.
\begin{conjecture}\label{Ricci lower bound}
     Given \( v \in (0,1] \) and \( n \in \mathbb{Z}_+ \), there exist constants \( t= t(n, v) > 0 \) and \( C = C(n, v) > 0 \) such that for any \( (Z, d, p) \in \mathcal{P}(n, v) \), the image of the homomorphism
    \[
    |\mathrm{Im}\left( \pi_1(B_t(p) \cap \mathscr{R}, \mathbb{Z}) \rightarrow \pi_1(B_1(p) \cap \mathscr{R}, \mathbb{Z}) \right)|\leq C.
    \]
\end{conjecture}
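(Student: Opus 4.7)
The plan is to import the volume-induction strategy of Theorem \ref{weak main} into the polarized K\"ahler setting, using the complex-analytic regular set $\mathscr{R}$ together with H\"ormander's $L^2$-method as the substitute for Anderson-Cheeger $\epsilon$-regularity (which is no longer available once the Ricci upper bound is dropped). Concretely, I would first introduce an \emph{analytic effective regular set} $\mathscr{R}_{\epsilon,\delta}\subset Z$ consisting of those points $q$ at which, on $B_\delta(q)$, there exist holomorphic peak sections of $K_Z^{\otimes m}$ (for some fixed $m=m(n,v)$) whose derivatives define an $\epsilon$-close-to-Euclidean holomorphic chart. The uniform peak-section construction of \cite{DS12,DS15,LiuSz1,LiuSz2} -- already invoked in the proof of Corollary \ref{cor--klt in general} above -- shows that this set is stable under pointed Gromov--Hausdorff convergence within $\mathcal{P}(n,v)$. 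Analogs of Lemma \ref{close regular points}, Lemma \ref{regular point dense}, Lemma \ref{regular point path connected}, and Lemma \ref{contradiciton sequence} should then go through, using that points of $\mathscr{R}\setminus \mathcal{R}$ lie along codimension-two analytic strata whose links are abelian and contribute controllably to $\pi_1$.

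With this setup, one runs the contradiction argument of Theorem \ref{weak main} essentially verbatim, inducting on the volume density. The base case $v$ close to $1$ follows from Colding's volume convergence together with the uniform H\"ormander construction, which forces the germ at $p$ to be biholomorphic to a neighborhood in $\mathbb{C}^n$, so $\mathscr{R}$ is locally simply connected. In the inductive step, a contradicting sequence Gromov--Hausdorff converges either to a non-cone (where the volume jumps uniformly on some definite scale, and one concludes by induction on the slightly larger volume) or to a metric cone $\mathcal{C}=\mathbb{R}^{n-k}\times C(Y)\in\mathcal{P}(n,v)$. In the cone case, the radial projection onto $Y$ combined with Pan--Wei's bound (Theorem \ref{pi1 bounded of the link}) controls the image of $\pi_1(Y\cap \mathscr{R})$, while the kernel of the projection is handled by an analytic analog of the loop decomposition Lemma \ref{small loops} together with the inductive hypothesis.

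The principal obstacle is two-fold. First, the conjecture asks for bounded \emph{cardinality} of $\pi_1$, not merely the exponent of $H_1$ as in Theorem \ref{main thm-unrescaled version}, and the non-abelian nature of $\pi_1$ means that the naive loop-decomposition bookkeeping only controls a bounded exponent because the number of small-loop generators produced by the decomposition grows as one passes to smaller scales. A plausible fix is to first attack Conjecture \ref{conj1} in the two-sided Ricci setting, where tangent cones are Ricci-flat K\"ahler and hence klt by Corollary \ref{cor-pi1 of cone}, and then apply Braun's theorem \cite{Braun2021} to get a definite \emph{cardinality} bound $|\pi_1(\mathcal{C}^{reg})|\leq \vol(\mathcal{C})^{-1}$ on each tangent cone; the degeneration theory of \cite{DS15,LiuSz2} would then propagate this to a uniform bound on $Z$ via a finite generating set of definite length. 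Second, extending the klt conclusion of Corollary \ref{cor-pi1 of cone} to tangent cones in $\mathcal{P}(n,v)$ without a Ricci upper bound is delicate, since the parallel section of $K^{\otimes m}_{\mathcal{C}^{reg}}$ used there came from Ricci-flatness; one would likely need to replace the flatness argument with a purely algebraic finiteness result for the normal affine variety underlying $\mathcal{C}$, exploiting its $\mathbb{C}^*$-action and the resulting cone structure on the canonical bundle.
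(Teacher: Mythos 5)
The statement you are addressing is stated in the paper as Conjecture \ref{Ricci lower bound} and is left \emph{open} there: the authors prove nothing in this direction beyond remarking that their method, combined with the complex-analytic $\epsilon$-regularity theorem of \cite[Proposition 3.2]{LiuSz1}, is likely to give only a \emph{weak} version analogous to Theorem \ref{main} (an exponent bound for local $H_1$ of an effective regular set), and they explicitly defer the question to future study. So there is no proof in the paper to compare with, and your proposal, as written, is a programme rather than a proof: it consists of asserted analogues of Lemmas \ref{close regular points}, \ref{regular point dense}, \ref{regular point path connected}, \ref{contradiciton sequence} and \ref{small loops} for a newly defined analytic effective regular set, none of which is established, followed by an inductive scheme whose two central difficulties you yourself acknowledge and do not resolve.

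The gaps are genuine and located exactly where you flag them. First, the paper's mechanism only ever yields an exponent bound on $H_1$: the non-commutativity of $\pi_1$ and the loss of control on the number of small loops produced by the decomposition under rescaling are precisely why, even with two-sided Ricci bounds, the authors obtain Theorem \ref{main thm-unrescaled version} and leave Conjecture \ref{conj1} open; your proposed fix presupposes Conjecture \ref{conj1} and then invokes Braun's theorem, which in turn requires knowing the tangent cone is klt -- but your second obstacle is exactly that the klt property (Corollary \ref{cor-pi1 of cone}) is proved via a parallel section of $K^{\otimes N}_{\mathcal C^{reg}}$ coming from Ricci-flatness, unavailable with only a lower Ricci bound, so the argument is circular at the level of what is currently known. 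Second, several intermediate claims would not go through as stated: without a Ricci upper bound, Theorem \ref{epsilon regularity} and Theorem \ref{CN} fail, metric singularities of real codimension two (cone-angle/divisorial type) appear inside $\mathscr{R}$, and the assertion that $\mathscr{R}\setminus\mathcal{R}$ consists of codimension-two strata ``whose links are abelian and contribute controllably to $\pi_1$'' is unsubstantiated -- controlling homotopies of loops near this set within effective regular neighborhoods is the heart of the problem, not a routine adaptation. Finally, Theorem \ref{pi1 bounded of the link} bounds $\pi_1(Y)$ (used in the paper only through $H_1(Y,\mathbb Z)$), not $\pi_1(Y\cap\mathscr{R})$, so the cone-case projection argument again only controls homology classes, landing you back at an exponent statement for $H_1$ rather than the cardinality bound on $\pi_1$ that the conjecture demands.
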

 It is likely that the proof presented in this paper, together with the ``complex-analytic $\epsilon$-regularity theorem" proved in \cite[Proposition 3.2]{LiuSz1}, can provide a proof of a weak version of the conjecture (similar to Theorem \ref{main}).
A positive answer to Conjecture \ref{Ricci lower bound}, or to some of its weaker versions, would shed light on the following question.

 \begin{question}
     Let $Z\in \mathcal{P}(n,v)$. Are the following two statements equivalent?
     \begin{itemize}
         \item The Ricci form is locally $\partial\pp$-exact; \item $Z$ locally has log terminal singularities.
     \end{itemize}
 \end{question}
 Here we say the Ricci form is locally $\partial\pp$-exact if for \textit{every} point $x\in Z$, locally there exist a  function $\varphi$, which is the sum of a plurisubharmonic function and a bounded function, such that as a current on the complex analytic regular part of $Z$ around $x$, we have 
 \begin{equation}\label{ric locally exact}
\mathrm{Ric}=\ii\partial\pp \varphi,
 \end{equation} where we recall that $\mathrm{Ric}$ is a well-defined closed (1,1)-current on the complex analytic regular part of $Z$ by \cite{LiuSz1}. 
 Note that one direction in the above question should be straightforward and can be justified as follows. Suppose $Z$ has locally log terminal singularity (indeed locally $\mathbb Q$-Gorenstein is enough), then we want to show that the Ricci form is locally $\partial\pp$-exact. To see this, for any given $x\in Z$ and let $s$ be a local nowhere vanishing holomorphic section of $K_{\mathscr{R}}^{\otimes N}$ for some $N\in \mathbb N$, where $\mathscr R$ denotes the complex analytic regular part of $Z$. By \cite[Proposition 3.1]{LiuSz1}, the singular K\"ahler metric $\omega_Z$ admits a local bounded  potential $u$ and on the complex analytic regular part near $x$, we have a plurisubharmonic function 
 \begin{equation}
f=\log\left(\frac{(s\wedge\bar s)^{\frac{1}{N}}}{\omega_Z^n}\right)+u,
 \end{equation}such that as a closed (1,1)-current on $\mathscr R$, we have 
 \begin{equation}
\mathrm{Ric}=\ii\partial\pp (f-u).
 \end{equation} We refer to \cite[Section 4]{LiuSz1} for the precise definition of $f$, and note that the key difference here is that, by assuming $Z$ is locally $\mathbb{Q}$-Gorenstein, we get a  well defined function $f$ on the complex analytic regular part near \emph{every} point, not just around a  complex analytic regular point as in \cite[Section 4]{LiuSz1}.
Since $\mathscr R$ has complex codimension at least 2 in $Z$, so $f$ extends to a plurisubharmonic function defined on a neighborhood of $x$ in $Z$  \cite{GR1956}. Then we get that \eqref{ric locally exact} holds locally near $x$ for some function $\varphi$, which is the sum of a plurisubharmonic function and a bounded function.

 Similar questions can be asked generally of non-collapsed RCD spaces induced by singular K\"ahler metrics on normal complex analytic varieties. See \cite{HS} for related discussions. 
   We leave these for future study.

\bibliographystyle{plain}
\bibliography{ref.bib}

\end{document}